\documentclass[preprint,12pt]{elsarticle}

\usepackage{amssymb,amsmath,amsthm,lineno,bm,graphicx,multirow,algorithm,algpseudocode,gensymb,textcomp,rotating}
\newtheorem{proposition}{Proposition}

\usepackage{hyperref}
\hypersetup{colorlinks=true,
            linkcolor=blue,
            anchorcolor=blue,
            citecolor=blue}

\journal{Journal of Computational Physics}

\begin{document}

\begin{frontmatter}

\title{An asymptotic-preserving reduced-order method for parametrised rarefied gas flow by proper generalised decomposition}

\author[label1]{Luowei Yin}

\author[label1,label2]{Wei Su\corref{cor1}} 

\affiliation[label1]{organization={Department of Mathematics, The Hong Kong University of Science and Technology},
            addressline={Clear Water Bay, Kowloon}, 
            city={Hong Kong},
            country={China}}
            
\affiliation[label2]{organization={Division of Emerging Interdisciplinary Areas, The Hong Kong University of Science and Technology},
            addressline={Clear Water Bay, Kowloon}, 
            city={Hong Kong},
            country={China}}


\cortext[cor1]{weisu@ust.hk}

\begin{abstract}
Modelling rarefied gas flow using the Boltzmann equation is vital in many areas. Due to the high dimensionality and coexistence of multiple characteristic scales, conventional solution strategies to this equation incur prohibitively high computational costs and are inadequate for rapid response in engineering design simulations. Based on proper generalised decomposition (PGD), we propose an \textit{a priori}, asymptotic-preserving reduced-order method to solve the high-dimensional, parametrised Shakhov kinetic model equation. The method reduces the original problem to a few low-dimensional problems by formulating separated representations for the low-rank solution, thereby mitigating the curse of dimensionality. To capture the hydrodynamic asymptotics, we incorporated solutions of some synthetic equations into the PGD algorithm. This treatment allows the PGD solver to automatically reduce to a macroscopic solver for the Navier-Stokes equations, whose solution naturally exhibits low-rank structure. By treating the rarefaction parameter as an additional coordinate, a parametrised solution can be computed once and for all over the entire range of rarefaction, enabling fast multiple queries to any points in the parameter space. Numerical examples are presented to demonstrate the capability of the method to simulate rarefied gas flow with certain accuracy and a significant reduction in computational costs.
\end{abstract}


\begin{highlights}
\item Proposed PGD reduced-order model for high-dimensional/parametrised Boltzmann equation
\item Captured hydrodynamic limit by combining synthetic iterative method
\item Proposed separated representation for dealing with arbitrary geometries 
\item Significantly reduced computational cost and maintained certain accuracy
\end{highlights}

\begin{keyword}
reduced-order modelling, proper generalised decomposition, asymptotic preserving, parametrised Boltzmann equation

\end{keyword}

\end{frontmatter}


\section{Introduction}
\label{sec1}

Kinetic theory has demonstrated its practical significance in describing the dynamics of rarefied gas flows encountered in various engineering applications, such as microelectromechanical systems, high-altitude flights, unconventional natural gas production, extreme ultraviolet lithography, etc. The centre of the theory is the Boltzmann equation, which determines the thermofluid properties of a gaseous system by providing evolution information on the probability distribution of gas molecules~\cite{Harris1971}. The Boltzmann equation for monatomic gas reads as follows:
\begin{equation*}
    \frac{\partial f}{\partial t}+\bm{v}'\cdot\frac{\partial f}{\partial\bm{x}'}+\bm{F}\cdot\frac{\partial f}{\partial\bm{v}'}=\mathcal{C}\left(f,f\right),
\end{equation*}
where $f\left(t,\bm{x}',\bm{v}'\right)$ is the one-particle velocity distribution function, which is a function of time $t$, spatial position $\bm{x}'$ and molecular velocity $\bm{v}'$; $\bm{F}$ is an external driven force; and $\mathcal{C}$ is the Boltzmann collision integral operator, representing the variation rate of the velocity distribution function due to molecular collisions. Methods for solving the Boltzmann equation are generally stochastic~\cite{Bird1994} or deterministic~\cite{Dimarco2014}. Both methods are expensive in computational time and memory requirements, due to the high dimensionality and multiscale nature of the equation. 
To solve the equation deterministically, one first discretises the velocity space by $N_{\text{v}}$ discrete points, resulting in $N_{\text{v}}$ partial differential equations (PDEs) continuous in time and spatial space that can be solved using conventional techniques of computational fluid dynamics. Typically, $N_{\text{v}}$ is around $10^4\sim10^6$ for a satisfactory solution, leading to a large number of degrees of freedom (DoF). Meanwhile, a stochastic method must trace many particles in space and time and deal with the variance reduction issue. In practical circumstances, rarefied gas flows often occur on multiple spatio-temporal scales, characterised by a significant separation of the Knudsen number, $\mathtt{Kn}$, which is defined as the ratio of the mean free path or the mean free time of gas molecules to a relevant process length or time scale. Therefore, parametric analysis is necessary to simulate gas flows over a wide range of Knudsen numbers. Due to the complexity of the Boltzmann collision operator, an iteration (time marching) scheme is required even when solving stationary and linear problems~\cite{ADAMS20023}. The multiscale nature increases the computational cost, as the convergence of the iteration is largely slowed when the Knudsen number is small~\cite{GSISsiam}. Furthermore, one needs to treat the stiffness induced by the collision operator and capture the correct hydrodynamic limit. This requires a kinetic scheme asymptotic preserving (AP). That is, a numerical solver of the Boltzmann equation can automatically reduce to a solver for some hydrodynamic equations in the corresponding limit under different scaling~\cite{Guo2023UP}, i.e., the Euler equations for $\mathtt{Kn}\rightarrow0$ and the Navier-Stokes equations for $0<\mathtt{Kn}\ll1$ under hyperbolic scaling. If a numerical method is not AP, the discretisation should be fine enough to resolve the molecular mean free path/time and overcome the stiffness. Consequently, inquiring solutions for varying Knudsen numbers is costly because it relies on many expensive, individual simulations.  

Novel and efficient solution strategies are needed to expedite engineering simulation and design, where a fast response is commonly required in parametric analysis and optimisation loops. In recent years, reduced-order modelling has gained increasing interest in reducing the computational complexity of high-dimensional and/or parametrised PDEs, where parameters can be considered as extra coordinates~\cite{Quarteroni2013}. The basic idea is to project a solution into a low-dimensional space spanned by a reduced basis constructed in devised algorithms. The resultant solution is a low-rank one, in contrast to the full-rank solution. A classic example of model order reduction is digital image compression via singular value decomposition (SVD), which allows for the retention of the most important features of a matrix. To date, popular reduced-order modelling approaches include the reduced basis method~\cite{Rozza2008}, proper orthogonal decomposition~\cite{Berkooz1993}, proper generalised decomposition (PGD)~\cite{PGD2011}, and the dynamic low-rank approximation (DLRA)~\cite{Koch2007}. Among these methods, PGD and DLRA build a reduced basis without \textit{a prior} knowledge of full-rank solutions; thus, they are particularly attractive when it is intractable to compute even one such solution.

Both PGD and DLRA rely on a separated representation: the low-rank solution is approximated as a sum of a few functional products, called \emph{modes}, with each function depending on fewer coordinates of a high-dimensional problem. The number of functional products is called \emph{rank}, which is usually small. As a result, the growth of DoF to the problem's dimension is reduced from an exponential increase to a linear increase at most, breaking the curse of dimensionality. The PGD computational process comprises two stages: an \textit{offline} phase calculates modes, once and for all, providing a generalised solution (or the so-called \emph{computational vademecum}) containing all possible solutions for a given domain of all coordinates; and an \textit{online} phase that accesses a specific solution simply via a simple reconstruction from the separated representation, enabling a fast response in parametric analysis. PGD has been used in structural optimisation~\cite{structral}, geometry parametrisation~\cite{SEVILLA2020112631}, inverse parameter identification~\cite{NADAL2015113}, rheology~\cite{CHINESTA2011578}, biomechanics~\cite{Zou2018}, power supply systems~\cite{Power}, fluid dynamics~\cite{TSIOLAKIS2022110802}, and neutron transport~\cite{DOMINESEY2023112137}. The DLRA method finds a low-rank solution by orthogonally projecting the governing equation onto the tangent space of the low-rank solution manifold. Unlike PGD, which constructs the low-rank approximation, DLRA evolves a low-rank solution conventionally (perhaps unnecessarily, e.g., for steady-state problems) over time. In addition, it involves an intensive calculation of the SVD and orthogonalisation, since its low-rank basis functions are required to be orthonormal. DLRA has been applied to the solution of time-dependent kinetic equations such as the Vlasov equation~\cite{Einkemmer2018,EINKEMMER2020109063}, the radiative transfer equations~\cite{PENG2020109735,STAMMER2026114879}, and the Boltzmann equation for rarefied gases~\cite{Einkemmer2019,Einkemmer2021,BoltzmannDLRA}. An overview of these works can be found in~\cite{EINKEMMER2025114191}. {\color{blue}Note that there is another class of methods, called sparse grid methods, to solve kinetic equations~\cite{Guo2016,TAO2019100022,POLLINGER2023112338,SCHNAKE2024113053}. These methods construct hierarchical basis functions in a sparse finite element space to mitigate the curse of dimensionality. Like PGD and DLRA, they build up representation online without knowledge of full-rank solutions.}

For any reduced-order modelling, one may concern two questions: if a true solution exhibits a low-rank structure that can be approximated; if a numerical method can capture the low-rank structure. The first question solely depends on the governing equations that one studies. Using Fourier analysis, it has been shown that the solution of the linear kinetic equation admits a low-rank approximation~\cite{EINKEMMER2020109063,EINKEMMER2021110353}. Nevertheless, the linear regime is only a sufficient condition. A wide range of functions can be represented by reduced bases with a small rank. Inevitable evidence is that, using Hilbert or Chapman-Enskog expansions~\cite{hilbert1924grundzuge,chapman1990mathematical}, the solution of a kinetic equation is expressible as polynomials of molecular velocity and some low-order derivatives of flow properties at small to moderate Knudsen numbers, which lead to asymptotics, such as the diffusion equation, the Euler or Navier-Stokes equations, and Grad's moment equations. That is, the asymptotic behaviour of a kinetic solution is naturally low rank. Therefore, a low-rank method is required to be AP. The DLRA for the linear radiation transport equation is AP in diffusive scaling when $\mathtt{Kn}\rightarrow0$, but should be subject to a strong constraint on time and spatial discretisation and a fully implicit treatment~\cite{Ding2021}. Some strategies could help reduce the cost of AP. For example, macro-micro decomposition~\cite{Lemou2008} is combined in the solver, and the low-rank method is applied only to the micro part of the solution. The scheme eventually captures the solution of rank-one in the diffusive regime~\cite{EINKEMMER2021110353}. Another AP DLRA scheme is developed for the non-linear Boltzmann equation of gas dynamics, which gets the Euler asymptotics using a penalising method~\cite{EINKEMMER2025114112}.

In this work, the authors aim to explore for the first time the ability of the PGD method as an efficient solver for the Boltzmann equation. The major contributions include: (1) PGD separated representations that make the low-dimensional problems solvable for arbitrary geometries partitioning with unstructured mesh and rarefied gas flows parametrised with the rarefaction; (2) an AP PGD iteration process, where the scheme can approximate the correct solution when $\mathtt{Kn}\ll1$, because it becomes a macroscopic solver for the Navier-Stokes equations. In the awareness of the authors, the existing DRLA kinetic solvers only deal with square or cubic geometries and structured meshes~\cite{PENG2023111748}.   

The remainder of the paper is arranged as follows. The rarefied gas flow and its kinetic description considered in this work and the asymptotic behaviour are presented in \S\ref{Sec:Statement}. \S\ref{sec:SIS} sums up the synthetic iterative scheme, which is the basis for the AP PGD low-rank solver. The separated representation of the low-rank solution and the PGD algorithm to find the solution are detailed in \S\ref{Sec:PGD}, where a proof of AP is presented. The numerical results are given in \S\ref{Sec:Result}. The manuscript is closed with some conclusions in \S\ref{Sec:conclusion}.

\section{Statement of the Problem}\label{Sec:Statement}

\subsection{Flow in a microchannel}
Consider a rarefied gas flowing through a straight channel along the $z'$-axis with an arbitrary cross section in the $x'$-$y'$ coordinate plane. The channel connects two reservoirs that contain the same gas and maintain a pressure $P_1$ and a temperature $T_1$ in one and $P_2$ and $T_2$ in the other, with $P_1\leq P_2$ and $T_1<T_2$. Due to the pressure and temperature gradients, the gas flow combines a Poiseuille flow from the high to the low pressure reservoir and a thermal creep flow from the cold to the hot reservoir. The end effect can be neglected when the channel length is sufficiently larger than its cross-sectional dimension $H$. Furthermore, dimensionless local pressure and temperature gradients in any cross section along the flow direction $z'$ are always much lower than 1~\cite{Sharipov1997}; that is,
\begin{equation}
    \xi_P=\frac{H}{P}\frac{\mathrm{d}P}{\mathrm{d}z'}\ll1,\quad \xi_T=\frac{H}{T}\frac{\mathrm{d}T}{\mathrm{d}z'}\ll1,
\end{equation}
where $P=P(z')$ and $T=T(z')$ are the local pressure and temperature, respectively. Due to the small gradients, the velocity distribution function to describe the dynamics of the gas system can be linearised as
\begin{equation}
    f=\frac{n}{v^3_{\text{m}}}\left\{f_{\text{eq}}+\xi_P\left[h_P+zf_{\text{eq}}\right]+\xi_T\left[h_T+z\left(\bm{v}^2-\frac{5}{2}\right)f_{\text{eq}}\right]\right\},
\end{equation}
where $n$ is the density of the molecular number and $v_{\text{m}}=\sqrt{2k_{\text{B}}T/m}$ is the most probable molecular velocity, with $k_{\text{B}}$ being the Boltzmann constant and $m$ the molecular mass. $h_a=h_a\left(x,y,\bm{v}\right)$, $a=\{P,T\}$ are dimensionless perturbed velocity distribution functions responsible for the pressure and temperature gradients, respectively. Note that $\bm{x}=\left(x,y,z\right)$ is the spatial coordinates normalised by $H$ and $\bm{v}=\left(v_x,v_y,v_z\right)$ is the molecular velocity normalised by $v_{\text{m}}$. $f_{\text{eq}}$ is the dimensionless global equilibrium distribution, read
\begin{equation}
    f_{\text{eq}}=\frac{1}{\pi^{3/2}}\exp\left(-\bm{v}^2\right).
\end{equation}

It is assumed that variations of $h_a$ are governed by the steady-state Shakhov kinetic model equation subjected to the fully diffuse boundary condition, resulting in a first-order PDE problem: find $h_a(x,y,\bm{v})$ such that
\begin{equation}\label{Shakhov}
\begin{aligned}
    \nabla\cdot\bm{v}h_a=\delta\left[2u_av_zf_{\text{eq}}+\frac{4}{15}q_av_z\left(\bm{v}^2-\frac{5}{2}\right)f_{\text{eq}}-h_a\right]-s_a,\quad \left(x,y\right)\in\Omega,\\
    h_a=0,\quad \left(x,y\right)\in\partial\Omega,\ \bm{v}\cdot\bm{n}<0,
\end{aligned}
\end{equation}
where
\begin{equation*}
    s_a=\begin{cases}
        v_zf_{\text{eq}},&a=P,\\
        v_z\left(\bm{v}^2-\dfrac{5}{2}\right)f_{\text{eq}},\quad&a=T.
    \end{cases}
\end{equation*}
In Eq.~\eqref{Shakhov}, $\nabla=(\partial_x,\partial_y)$; $\Omega$ represents the region of the channel cross section and $\partial\Omega$ is its boundary; $\bm{n}$ is the outward unit normal vector at the boundary, and {\color{magenta}the inflow value of $h_a$ is always zero for this special problem~\cite{Titarev2010}}. $\delta$ is the rarefaction parameter, which is inversely proportional to the Knudsen number
\begin{equation*}
    \delta=\frac{PH}{\mu_g v_{\text{m}}}\sim\frac{1}{\mathtt{Kn}},
\end{equation*}
where $\mu_g$ is the gas viscosity. At the macroscale, dimensionless flow velocity $u_{a}$ and heat flux $q_{a}$ are obtained by integrating the perturbed distribution function through velocity moments, such as
\begin{equation}\label{moments}
    u_{a}=\langle v_z,h_a\rangle_{\bm{v}},\quad q_{a}=\langle v_z\left(\bm{v}^2-5/2\right),h_a\rangle_{\bm{v}},
\end{equation}
with $\langle\cdot,\cdot\rangle_{\bm{v}}$ denoting the $L^2$ inner product in the molecular velocity space. Dimensionless flow rates are also of interest. The one for the Poiseuille flow is computed as
\begin{equation}
    G_P=-2\int_{\Omega} u_P\mathrm{d}x\mathrm{d}y,
\end{equation}
while that of the thermal creep flow can be obtained according to the Onsager-Casimir relation~\cite{Sharipov1999}
\begin{equation}
    G_T=2\int_{\Omega} q_P\mathrm{d}x\mathrm{d}y,
\end{equation}
allowing one to calculate the Poiseuille flow only. In the following sections, $s_a$ is chosen as $s_a=v_zf_{\text{eq}}$ and the subindex $a=\{P,T\}$ will be omitted.  

\subsection{Low-rank structure in hydrodynamic regime}

In the Navier-Stokes limit, i.e., when the Knudsen number is sufficiently small (but not necessarily zero), the low-rank structure of the solution to Eq.~\eqref{Shakhov} can be shown using the Chapman-Enskog expansion~\cite{chapman1990mathematical}. This asymptotic analysis starts from the time-dependent version of Eq.~\eqref{Shakhov}, that is, 
\begin{equation}\label{ShakhovTime}
    \partial_th+\nabla\cdot\bm{v}h=\delta\left[2uv_zf_{\text{eq}}+\frac{4}{15}qv_z\left(\bm{v}^2-\frac{5}{2}\right)f_{\text{eq}}-h\right]-s,
\end{equation}
where $h$ and $\partial_t$ are expressed in a power series of the Knudsen number
\begin{equation}\label{CE}
\begin{aligned}
    h&=h^{(0)}+\delta^{-1}h^{(1)}+\delta^{-2}h^{(2)}+\cdots,\\
    \partial_t&=\partial_{t_0}+\delta^{-1}\partial_{t_1}+\delta^{-2}\partial_{t_2}+\cdots,
\end{aligned}
\end{equation}
where $\partial_{t_i}$ denotes the contribution to $\partial_t$ from the derivatives of the flow properties $u$ and $q$~\cite{Guo2023UP}. The coefficients $h^{(i)}$ are of $O(1)$ and are found by substituting the expansion~\eqref{CE} into Eq.~\eqref{Shakhov}, multiplying $\delta^{-1}$ on both sides and collecting the terms corresponding to each level of $\delta^{-1}$. At the Navier-Stokes limit, the solution is truncated to $O(\delta^{-1})$, which are obtained by the following three balance equations
\begin{align}
    \delta^0:&\quad 2uv_zf_{\text{eq}}+\frac{4}{15}qv_z\left(\bm{v}^2-\frac{5}{2}\right)f_{\text{eq}}-h^{(0)}=0,\label{h0}\\
    \delta^{-1}:&\quad \partial_{t_0}h^{(0)}+\nabla\cdot\bm{v}h^{(0)}=-h^{(1)}-s,\label{h1}\\
    \delta^{-2}:&\quad \partial_{t_1}h^{(0)}+\partial_{t_0}h^{(1)}+\nabla\cdot\bm{v}h^{(1)}=-h^{(2)}.\label{h2}
\end{align}
Eq.\eqref{h0} gives $h^{(0)}=2uv_zf_{\text{eq}}$. Note that taking $\langle v_z\left(\bm{v}^2-5/2\right),\cdot\rangle_{\bm{v}}$ to Eq.\eqref{h0} yields $q=q/3$, which implies that the heat flux is zero when $\delta^{-1}\rightarrow0$. Meanwhile, taking $\langle v_z,\cdot\rangle_{\bm{v}}$ yields $\langle v_z,h^{(0)}\rangle_{\bm{v}}=u=\langle v_z,h\rangle_{\bm{v}}$, which is identical to $\sum_{i=1}\langle v_z,h^{(i)}\rangle_{\bm{v}}=0$. This results from the conservation of momentum in the collision term. We can require $\langle v_z,h^{(i)}\rangle_{\bm{v}}=0$, for $i\geq1$. Substituting $h^{(0)}$ into Eq.~\eqref{h1}, $h^{(1)}$ is obtained as
\begin{equation}
    h^{(1)}=-2v_zf_{\text{eq}}\partial_{t_0}u-2v_zf_{\text{eq}}\bm{v}\cdot\nabla u-s,
\end{equation}
where $\partial_{t_0}u$ is found taking $\langle v_z,\cdot\rangle_{\bm{v}}$ to Eq.~\eqref{h1}, giving $\partial_{t_0}u=-1/2$. Eventually,
\begin{equation}\label{h1solution}
    h^{(1)}=-2v_zf_{\text{eq}}\bm{v}\cdot\nabla u,
\end{equation}
Therefore, when the Knudsen number is small, i.e., $\delta^{-1}\ll 1$
\begin{equation}
    h\simeq h^{(0)}+\delta^{-1}h^{(1)}=2uv_zf_{\text{eq}}-2\delta^{-1}v_xv_zf_{\text{eq}}\partial_xu-2\delta^{-1}v_yv_zf_{\text{eq}}\partial_yu,
\end{equation}
has an intrinsic low-rank structure of rank 3, since each term is expressible in some separated form. The governing equation for $u$ is obtained by taking $\langle v_z,\cdot\rangle_{\bm{v}}$ to Eq.~\eqref{h1}, taking $\langle \delta^{-1}v_z,\cdot\rangle_{\bm{v}}$ to Eq.~\eqref{h2} and adding the resulting equations together, read as
\begin{equation}\label{uNS}
    \partial_tu+\frac{1}{2}\partial_x p_{xz}+\frac{1}{2}\partial_y p_{yz}=-\frac{1}{2},
\end{equation}
where $p_{xz}=\delta^{-1}\langle2v_xv_z,h^{(1)}\rangle_{\bm{v}}$ and $p_{yz}=\delta^{-1}\langle2v_yv_z,h^{(1)}\rangle_{\bm{v}}$ are the shear stresses. The constitutive laws for $p_{xz}$, $p_{yz}$, as well as $q$ are obtained by solution~\eqref{h1solution}, written as
\begin{equation}\label{constutive}
    p_{xz}=-\delta^{-1}\partial_x u,\quad p_{yz}=-\delta^{-1}\partial_y u,\quad q=0.
\end{equation}
Eqs.~\eqref{uNS} and~\eqref{constutive} are the simplified Navier-Stokes equations for the flow considered. The heat flux is always zero because Poiseuille flow is isothermal. The analytical solution for the stationary flow rate depends on the cross-sectional geometry. For a square channel, the dimensionless flow rates are~\cite{Titarev2010}
\begin{equation}\label{GpHydro}
    G_P=\frac{\delta}{6}\left[1-\frac{192}{\pi^5}\sum^{\infty}_{n=0}\frac{\tanh\left(0.5\pi\left(2n+1\right)\right)}{\left(2n+1\right)^5}\right],\quad G_T=0.
\end{equation}

\subsection{Low-rank structure in free-molecular regime}

In the free-molecular regime, when $\delta=0$, the kinetic equation can be integrated analytically by the method of characteristics. For a square with unit sides and its vertices at $(0,0)$, $(1,0)$, $(1,1)$ and $(0,1)$, the solution of $h$ may have one of the following forms, depending on the direction of characteristics~\cite{Titarev2010},
\begin{equation}
    h=s\frac{x-1}{v_x},\quad s\frac{x}{v_x},\quad s\frac{y-1}{v_y},\quad s\frac{y}{v_y}.
\end{equation}
The solution also takes a low-rank structure. The flow rates are~\cite{Sharipov1999}
\begin{equation}\label{GpGtfree}
    G_P=\frac{1}{\sqrt{\pi}}\left[2\ln\left(1+\sqrt{2}\right)-\frac{2}{3\left(1+\sqrt{2}\right)}\right],\quad G_T=\frac{1}{2}G_P.
\end{equation}

\section{Synthetic Iterative Scheme}\label{sec:SIS}

Before describing the AP PGD method for solving rarefied gas flows, we first summarise the synthetic iterative scheme~\cite{Valougeorgis2003,SU2020109245} for the full-rank solution. Given some initial data $\{h^n,u^n,q^n\}_{n=0}$, the solution $h^{n+1}$ in the $(n+1)$-th step is solved from
\begin{equation}\label{ShakhovSIS}
\begin{aligned}
    \nabla\cdot\bm{v}h^{n+1}=\delta\left[2u^nv_zf_{\text{eq}}+\frac{4}{15}q^nv_z\left(\bm{v}^2-\frac{5}{2}\right)f_{\text{eq}}-h^{n+1}\right]-s,\\
    \left(x,y\right)\in\Omega,\\
    h^{n+1}=0,\quad \left(x,y\right)\in\partial\Omega,\ \bm{v}\cdot\bm{n}<0,
\end{aligned}
\end{equation}
where $u^n$ and $q^n$ are the solutions of the following macroscopic equations calculated at the $n$-th step~\cite{Valougeorgis2003}
\begin{equation}\label{macroEq}
    \begin{aligned}
    \nabla^2u^n=&\delta-\partial_{xx}\langle v_z (2v_x^2 - 1), h^n\rangle_{\bm{v}}\\
    &-\partial_{xy}\langle4 v_xv_yv_z,h^n\rangle_{\bm{v}}-\partial_{yy}\langle v_z (2v_y^2 - 1), h^n\rangle_{\bm{v}},&(x,y)\in\Omega,\\
    \nabla^2q^n=&\frac{4}{3}\delta^2q^n-\partial_{xx}\Big\langle v_z (2v_x^2-1) \left(\bm{v}^2 - \frac{5}{2}\right) ,h^n\Big\rangle_{\bm{v}}\\
    &-\partial_{xy}\Big\langle 4 v_xv_yv_z \left(\bm{v}^2 - \frac{5}{2}\right), h^n \Big\rangle_{\bm{v}}\\
    &-\partial_{yy}\Big\langle v_z (2v_y^2 - 1) \left(\bm{v}^2 - \frac{5}{2}\right) ,h^n\Big\rangle_{\bm{v}},&(x,y)\in\Omega,\\
    u^n=u^n_h=&\langle v_z,h^n\rangle_{\bm{v}},\quad q^n=q^n_h=\Big\langle v_z\left(\bm{v}^2-\frac{5}{2}\right),h^n\Big\rangle_{\bm{v}},&(x,y)\in\partial\Omega.
    \end{aligned}
\end{equation}
Note that the Dirichlet boundary condition is used, where the boundary values are calculated by taking velocity moments of $h^n$. $u_h$ and $q_h$ are used to emphasise that the values are calculated from the distribution function. 

For deterministic solutions of the kinetic equation~\eqref{ShakhovSIS}, the so-called discrete velocity method is commonly used, where the continuous molecular velocity is replaced by $N_{\text{v}}=N_x\times N_y\times N_z$ discrete points $\bm{v}^{j}=\left(v^{j_1}_x,v^{j_2}_y,v^{j_3}_z\right)$ with $j_1=\{1,\dots,N_x\}$, $j_2=\{1,\dots,N_y\}$, and $j_3=\{1,\dots,N_z\}$. Then, the governing equations for the unknown distribution functions $h^{j,n+1}=h^{n+1}\left(x,y,\bm{v}^{j}\right)$ at discrete velocities are solved using a traditional numerical scheme for first order PDEs. {\color{blue}In this work, the nodal discontinuous Galerkin finite element method employing upwind numerical flux is used on unconstrained triangular meshes. The transport sweep technique is used to solve the advection operator implicitly by marching along the direction of characteristics from a given boundary condition, avoiding the assembly of a global matrix~\cite{Pautz01022002}.} The discrete distribution functions are approximated in the piecewise polynomial space of complete degree at most $k$, the total DoF to approximate Eq.~\eqref{ShakhovSIS} is $N_{\text{v}}\times N_{\text{DG}}$, where $N_{\text{DG}}=N_{\text{el}}\left(k+1\right)\left(k+2\right)/2$ with $N_{\text{el}}$ being the number of finite elements partitioning the computation domain $\Omega$. Macroscopic equations~\eqref{macroEq} appear in the form of a diffusion equation. They are solved by the continuous Garlerkin method in the polynomial space also of total degree $k$, and thus the total DoF to approximate Eq.~\eqref{macroEq} is $2N_{\text{CG}}$ with $N_{\text{CG}}\simeq N_{\text{el}}k^2/2$ for a relatively uniform mesh. The synthetic iterative scheme expedites the convergence of the solving processes, particularly in the small-Knudsen-number regimes. Let $N^{\text{fr}}_{\text{itr}}$ be the number of iterations for a convergent solution, the overall computational complexity would be $O\left(N^{\text{fr}}_{\text{itr}}\left(N_\text{v}N_{\text{DG}}+2N_{\text{CG}}\right)\right)$ and the required memory would be proportional to $O\left(N_\text{v}N_{\text{DG}}+2N_{\text{CG}}\right)$. The obtained solution is the full-rank solution. 

\section{Proper Generalised Decomposition Method}\label{Sec:PGD}

This section details the proper generalised decomposition strategies for \textit{a priori} low-rank solutions of the Boltzmann equation. To achieve AP, the method would be developed to solve the system of the synthetic iterative scheme, i.e., Eqs.~\eqref{ShakhovSIS} and~\eqref{macroEq}, and specifically the low-rank solver is applied to the kinetic equation.

\subsection{Separated representation of perturbed velocity distribution function}

To construct a low-rank solution to the problem \eqref{ShakhovSIS} using PGD, the molecular velocity is expressed in cylindrical coordinates as
\begin{equation}\label{cylind}
\begin{aligned}
    v_x=v_r\cos\theta,\quad v_y=v_r\sin\theta,\quad\bm{v}^2=v_r^2+v_z^2,\quad \mathrm{d}\bm{v}=v_r\mathrm{d}\theta\mathrm{d}v_r\mathrm{d}v_z,
\end{aligned}
\end{equation}
where $\theta\in\left[0,2\pi\right]$ is the polar angle in the $x$-$y$ plane and $v_r=\sqrt{v^2_x+v^2_y}$. {\color{magenta}This is for dealing with arbitrary geometries in the low-dimensional spatial problem presented later.} Substituting Eq.~\eqref{cylind} into Eqs.~\eqref{ShakhovSIS}, the kinetic equation becomes
\begin{equation}\label{ShakhovCylinder}
    \begin{aligned}
        v_r\nabla\cdot\bm{s}h=\delta\left[2uv_zf_{\text{eq}}+\frac{4}{15}qv_z\left(\bm{v}^2-\frac{5}{2}\right)f_{\text{eq}}-h\right]-s,\quad\left(x,y\right)\in\Omega,\\
    h=0,\quad \left(x,y\right)\in\partial\Omega,\ \bm{s}\cdot\bm{n}<0,
    \end{aligned}
\end{equation}
where $\bm{s}=(\cos\theta,\sin\theta)$ is the direction vector. The independent variables of the system become $x$, $y$, $v_r$, $\theta$ and $v_z$, and the value of $\bm{s}\cdot\bm{n}$ determines when to impose the boundary value of $h$.

PGD assumes that a low-rank solution is expressible in terms of a finite sum of products of functions that depend on parts of the coordinates, proposed as
\begin{equation}\label{PGDh}
    h\left(x,y,v_r,\theta,v_z\right)=\sum^m_{i=1}X_i\left(x,y,\theta\right)V_i\left(v_r,v_z\right),
\end{equation}
where $X_i$ and $V_i$ are unknown functions, named the $i$-th spatio-angular mode and the $i$-th mesoscopic velocity mode, respectively; and $m$ is the total number of PGD modes, i.e., the rank of the solution. {\color{magenta}Note that the way to separate coordinates is essential for solvability. Later, it will be shown that the problem to solve the modes $X_i$ is a boundary value problem with an advection operator [i.e., Eq.~\eqref{Xstrong}]. If we intend to solve it also using the discontinuous Galerkin method and upwind numerical flux (as the same for the full-rank solution), the inflow of $X_i$ for each finite element, including the boundary condition, should be specified. The inflow depends on the direction of characteristics, i.e., the polar angle for arbitrary two-dimensional geometries paved with unstructured mesh. Therefore, to solve the problem, cylindrical coordinates need to be used and the polar angle $\theta$ must be grouped with spatial coordinates $x$ and $y$.}


The PGD modes are commonly found by the Galerkin method and are enriched sequentially~\cite{PGDprimer}; that is, when computing the $m$-th modes, the first $m-1$ modes are known. Thus, the $m$-th modes $X_m$ and $V_m$ are found such that
\begin{equation*}
    \begin{aligned}
        \int X^{\ast}V^{\ast}\Bigg\{\delta X_mV_m+v_rV_m&\nabla\cdot\bm{s}X_m=2\delta uv_zf_{\text{eq}}+\frac{4}{15}\delta qv_z\left(\bm{v}^2-\frac{5}{2}\right)f_{\text{eq}}\\
        &-s-\sum^{m-1}_{i=1}\left(\delta X_iV_i+v_rV_i\nabla\cdot\bm{s}X_i\right)\Bigg\}\mathrm{d}x\mathrm{d}y\mathrm{d}\bm{v},
    \end{aligned}
\end{equation*}
where $X^{\ast}V^{\ast}$ are some test functions; $u$ and $q$ are the flow properties solved from~\eqref{macroEq} when $h=\sum^m_{i=1}X_iV_i$. Although the original physical equation is linear, PGD results in a high-dimensional nonlinear problem. Fix-point iteration with an alternating-direction strategy is commonly used to approximate the nonlinear problem~\cite {PGDprimer}. Due to the separated representation, the computational cost is reduced since the high-dimensional problem is decomposed into a few low-dimensional ones. 

\subsection{Fix-point iteration}

In the $(n+1)$-th step of the fix-point iteration to calculate the $m$-th modes, first, $X_m$, as well as $u$ and $q$ are preserved from the previous step, and the velocity mode $V_m$ is updated. Thus, the test function is set to $X^{\ast}=X^n_m$. $V^{n+1}_m$ is found from the following strong form
\begin{equation}\label{Vstrong}
    \begin{aligned}
        \delta\alpha_{mm}V^{n+1}_m+v_r\beta_{mm}V^{n+1}_m=2\delta&\gamma_mv_zf_{\text{eq}}+\frac{4}{15}\delta \kappa_mv_z\left(\bm{v}^2-\frac{5}{2}\right)f_{\text{eq}}\\
         &-\sigma_m v_zf_{\text{eq}}-\sum^{m-1}_{i=1}\left(\delta\alpha_{mi}V_i+v_r\beta_{mi}V_i\right),
    \end{aligned}
\end{equation}
where
\begin{equation*}
    \begin{aligned}
        \alpha_{mi}=\int X^{n}_{m}X_{i}\mathrm{d}x\mathrm{d}y\mathrm{d}\theta,\quad
        \beta_{mi}=\int\left(X^n_m\nabla\cdot\bm{s}X_i\right) \mathrm{d}x\mathrm{d}y\mathrm{d}\theta,\\
        \gamma_{m}=\int X^n_{m}u^n\mathrm{d}x\mathrm{d}y\mathrm{d}\theta,\quad\kappa_{m}=\int X^n_{m}q^n\mathrm{d}x\mathrm{d}y\mathrm{d}\theta,\quad\sigma_m=\int X^n_{m}\mathrm{d}x\mathrm{d}y\mathrm{d}\theta,\\
        X_i=X^n_m,\quad \text{for }i=m. 
    \end{aligned}
\end{equation*}

Then, the $m$-th spatio-angular mode $X_m$ is computed, where $V_m$, $u$, and $q$ are known. The test function is set to $V^{\ast}=V^{n+1}_m$. $X^{n+1}_m$ is obtained from the following boundary value problem
\begin{equation}~\label{Xstrong}
    \begin{aligned}
        \delta\hat{\alpha}_{mm} X^{n+1}_m+\hat{\beta}_{mm}\nabla\cdot\bm{s}X^{n+1}_m=2\delta\hat{\gamma}_mu^n+\frac{4}{15}\delta\hat{\kappa}_mq^n \\
       -\hat{\gamma}_m-\sum^{m-1}_{i=1}\left(\delta\hat{\alpha}_{mi} X_i+\hat{\beta}_{mi}\nabla\cdot\bm{s}X_i\right),\quad\left(x,y\right)\in\Omega,\\
       X^{n+1}_m=0,\quad \left(x,y\right)\in\partial\Omega,\ \bm{s}\cdot\bm{n}<0,
    \end{aligned}
\end{equation}
where
\begin{equation*}
    \begin{aligned}
        \hat{\alpha}_{mi}=\int V^{n+1}_{m}V_{i}v_r\mathrm{d}v_r\mathrm{d}v_z,\quad \hat{\beta}_{mi}=\int v_rV^{n+1}_{m}V_{i}v_r\mathrm{d}v_r\mathrm{d}v_z,\\
        \hat{\gamma}_m=\int v_zV^{n+1}_{m}f_{\text{eq}}v_r\mathrm{d}v_r\mathrm{d}v_z,\quad\hat{\kappa}_m=\int v_z\left(\bm{v}^2-\frac{5}{2}\right)V^{n+1}_{m}f_{\text{eq}}v_r\mathrm{d}v_r\mathrm{d}v_z,\\
        V_i=V^{n+1}_m,\ \text{for }i=m.
    \end{aligned}
\end{equation*}

Finally, $u$ and $q$ are solved from the macroscopic equations ~\eqref{macroEq} with updated $X_mV_m$.

\subsection{PGD algorithm and computational complexity}

A PGD algorithm is generally constructed as a two-loop structure: starting from the first mode, the outer loop enriches the PGD modes, and the inner loop applies alternative direction fixed-point iterations to search for the velocity and spatio-angular modes iteratively. If the PGD algorithm converges, the mode amplitude would generally decrease as $m$ becomes larger, implying that the contribution of modes degenerates. To trade off the accuracy and computational cost, stopping criteria should be given for both the outer and inner loops. In this work, the outer loop can be either restricted by a prescribed number of truncated modes $M_{\text{md}}$ or terminated when the following residual $\mathtt{res}_{\text{out}}$ is less than a tolerance $\mathtt{tol}_{\text{out}}$
\begin{equation}\label{outer}
\begin{aligned}
    \mathtt{res}_{\text{out}}=\min&\Bigg\{
    \frac{ \|X_m\|\cdot\|V_m\| }{\sum_{i=1}^{m-1}\|X_i\|\cdot \|V_i\|},\\
    &\max\Bigg\{\frac{\|u_m-u_{m-1}\|}{\|u_{m-1}\|},\frac{\|q_m-q_{m-1}\|}{\|q_{m-1}\|}\Bigg\}\Bigg\} <\mathtt{tol}_{\text{out}},
\end{aligned}
\end{equation}
where $u_m/q_m$ and $u_{m-1}/q_{m-1}$ represent those solved by Eq.~\eqref{macroEq} based on $h$ with the total $m$ modes and $m-1$ modes, receptively. This means that the enrichment of the PGD mode would stop when either the new modes have small amplitude or the macroscopic quantities converge. Meanwhile, the inner loop is stopped when the residual $\mathtt{res}_{\text{in}}$ is less than a tolerance $\mathtt{tol}_{\text{in}}$, i.e.,
\begin{equation}
    \mathtt{res}_{\text{in}}=\max\Bigg\{\frac{\|V^{n+1}_m-V^n_m\|}{\|V^n_m\|},\ \frac{\|X^{n+1}_m-X^n_m\|}{\|X^n_m\|}\Bigg\}<\mathtt{tol}_{\text{in}},
\end{equation}
or the iteration exceeds the allowed maximum step $N^{\text{in}}_{\text{itr}}$. Here $\|\cdot\|$ is the standard $\mathcal{L}_2$ norm. The implementation of PGD is presented in \textbf{Algorithm}~\ref{Algorithm}. Note that the spatio-angular mode $X_m$ is normalised in line 9 for numerical stability.


\begin{algorithm}[t]
\caption{PGD algorithm}\label{Algorithm}
\begin{algorithmic}[1]
\State Specify user-controlled input $M_{\text{md}}$ (or $\mathtt{tol}_{\text{out}}$), $N^{\text{in}}_{\text{itr}}$, $\mathtt{tol}_{\text{in}}$
\State $m\leftarrow1$
\While{$m\leq M_{\text{md}}$ (or $\mathtt{res}_{\text{out}}\geq\mathtt{tol}_{\text{out}}$)}
    \State Initialise $X^0_m$, $V^0_m$, $u^0$, and $q^0$
    \State $n\leftarrow0$
    \While{$\mathtt{res}\geq\mathtt{tol}_{\text{in}}$ and $n\leq N^{\text{in}}_{\text{itr}}$}
        \State Solve Eq.~\eqref{Vstrong} for $V^{n+1}_m$
        \State Solve Eq.~\eqref{Xstrong} for $X^{n+1}_m$
        \State Normalised $X_m$: $X_m\leftarrow X_m/\| X_m\|$
        \State Calculate $\mathtt{res}_{\text{in}}$
        \State Solve Eq.~\eqref{macroEq} for $u^{n+1}$ and $q^{n+1}$
        \State $n\leftarrow n+1$
    \EndWhile
    \State Calculate $\mathtt{res}_{\text{out}}$
    \State $m\leftarrow m+1$
\EndWhile
\end{algorithmic}
\end{algorithm}

When solving Eqs.~\eqref{Vstrong} for solutions of mesoscopic velocity modes, the discrete velocity method is still applicable. If $v_r$ and $v_z$ are discretised by points $\{v^{j_1}_r\}^{N_r}_{j_1=1}$ and $\{v^{j_2}_z\}^{N_z}_{j_2=1}$, respectively. The resultant algebraic system has a size of $N_r\times N_z$. To solve spatio-angular modes from Eq.~\eqref{Xstrong}, the polar angle is discretised by $N_{\theta}$ points $\{\theta^{j_3}\}^{N_{\theta}}_{j_3=1}$. {\color{blue}The resultant PDEs are approximated using the discontinuous Galerkin method with upwind flux, and similar to that for the full-rank solution, transport sweep along each direction $\bm{s}^{j_3}=\left(\cos\theta^{j_3},\sin\theta^{j_3}\right)$ is used to solve the advection operator implicitly.} Macroscopic equations~\eqref{macroEq} are still solved using the continuous Galerkin method. In summary, the computational complexity of the PGD solution is around $O\left(M_{\text{md}} N^{\text{in}}_{\text{itr}}\left(N_rN_z+ N_{\theta}N_{\text{DG}}+2N_{\text{CG}}\right)\right)$ and the memory requirement is proportional to $O\left(M_{\text{md}}\left(N_rN_z+ N_{\theta}N_{\text{DG}}+2N_{\text{CG}}\right)\right)$. Compared to the full-rank solution, the computational complexity and the required memory can be significantly reduced as $M_{\text{md}}$ and $N^{\text{in}}_{\text{itr}}$ are expected to be small. 

\subsection{Asymptotic preserving property}

The asymptotic behaviour of the PGD low-rank kinetic solver can be shown by performing the Chapman-Enskog expansion. Since the PGD iteration process is of core interest, the spatial and velocity coordinates remain non-discrete. The separated representation~\eqref{PGDh} is expanded by the Chapman-Enskog ansatz as
\begin{equation}\label{modeCE}
    \begin{aligned}
        h=\sum^m_{i=1}X_iV^{(0)}_i+\delta^{-1}\sum^m_{i=1}X_iV^{(1)}_i+\delta^{-2}\sum^m_{i=1}X_iV^{(2)}_i+\cdots,
    \end{aligned}
\end{equation}
where expansion is applied to $V_i$ mode since $X_i$ is always of $O(1)$ due to normalisation (see line 9 in Algorithm~\ref{Algorithm}). Substituting the expansion into the macroscopic equations~\eqref{macroEq} and truncating to $O(\delta^{-1})$, we have
\begin{equation}\label{macroApEq}
    \begin{aligned}
    \frac{1}{\delta}\nabla^2u^n=&1-\frac{1}{\delta}\partial_{xx}\langle v_z (2v_x^2 - 1), X^n_mV^{(0),n}_m+\sum^{m-1}_{i=1}X_iV^{(0)}_i\rangle_{\bm{v}}\\
    &-\partial_{xy}\langle4 v_xv_yv_z,X^n_mV^{(0),n}_m+\sum^{m-1}_{i=1}X_iV^{(0)}_i\rangle_{\bm{v}}\\
    &-\partial_{yy}\langle v_z (2v_y^2 - 1), X^n_mV^{(0),n}_m+\sum^{m-1}_{i=1}X_iV^{(0)}_i\rangle_{\bm{v}},&(x,y)\in\Omega,\\
    u^n=&u^{(0),n}_h,&(x,y)\in\partial\Omega,\\
    \end{aligned}
\end{equation}
and $q^n=0$, i.e., the heat flux is always equal to zero.

\begin{proposition}\label{prop}
The PGD low-rank solver presented in Algorithm~\ref{Algorithm} becomes a macroscopic solver for the Navier-Stokes equations~\eqref{uNS} and~\eqref{constutive} in the limit $0<\delta^{-1}\ll1$, in the sense that
\begin{enumerate}
    \item[(1)] all the terms of $\langle\cdot,\cdot\rangle_{\bm{v}}$ vanish in Eq.~\eqref{macroApEq};
    \item[(2)] $u^n=u^{(0),n+1}_h+O(\delta^{-1-\epsilon})$, with $\epsilon\geq0$.
\end{enumerate}
\end{proposition}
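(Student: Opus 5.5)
We carry out the Chapman--Enskog balance on the two alternating sub-steps of the fixed-point iteration, Eqs.~\eqref{Vstrong} and~\eqref{Xstrong}, with the ansatz~\eqref{modeCE} for the velocity modes. The spatio-angular modes $X_i$ are normalised and hence $O(1)$. Divide Eq.~\eqref{Vstrong} by $\delta$ and collect powers of $\delta^{-1}$. At order $\delta^0$ this gives
\[
\alpha_{mm}V^{(0),n+1}_m+\sum_{i=1}^{m-1}\alpha_{mi}V^{(0)}_i=2\gamma_m v_zf_{\text{eq}}+\frac{4}{15}\kappa_m v_z\left(\bm{v}^2-\frac{5}{2}\right)f_{\text{eq}} .
\]
Using $q^n=0$ at leading order, as noted after Eq.~\eqref{macroApEq}, and arguing inductively in $m$ (assuming the earlier modes already have this form), every $V^{(0)}_i$ is proportional to $v_zf_{\text{eq}}$. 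Hence $\sum_i X_iV^{(0)}_i=c(x,y,\theta)\,v_zf_{\text{eq}}$. Substituting this into Eq.~\eqref{Xstrong} at leading order identifies $\sum_i\hat\alpha_{mi}X_i$ with $2\hat\gamma_m u^n$. Since $\hat\alpha_{mi}=\hat\gamma_m\cdot$const when $V_i\propto v_zf_{\text{eq}}$, we get $c=2u^n$, which is independent of $\theta$. Thus the leading-order PGD distribution is $h^{(0),n+1}=2u^{n}v_zf_{\text{eq}}$, the same form as Eq.~\eqref{h0}.

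\textbf{Item (1).} Insert $h^{(0)}=c\,v_zf_{\text{eq}}$ into the three brackets of Eq.~\eqref{macroApEq}. In cylindrical coordinates, $\langle v_z(2v_x^2-1),v_zf_{\text{eq}}\rangle_{\bm v}=\langle v_z^2f_{\text{eq}}\rangle\,(2\langle v_x^2\rangle_{\text{Maxw}}-1)=0$ because $\langle v_x^2 f_{\text{eq}}\rangle=1/2$. The same holds for the $v_y$ term, and $\langle 4v_xv_yv_z,v_zf_{\text{eq}}\rangle_{\bm v}=0$ by the oddness of $\sin\theta\cos\theta$ over $[0,2\pi]$. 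The angular dependence of $c$ does not spoil this, because $c$ is $\theta$-independent at leading order. It remains to check the weighting: if $c$ were to retain residual $\theta$ dependence, integrating in $\theta$ still factors out, since the moments involve $\int\cos^2\theta$ and $\int\sin\theta\cos\theta$ against $c$. I would verify this using the $\theta$-independence of $c$ derived above. All bracket terms therefore vanish at $O(1)$. What remains is $\delta^{-1}\nabla^2u^n=1$ plus the $O(\delta^{-1})$ contributions from $V^{(1)}$, which produce $\nabla\cdot(\delta^{-1}\nabla u)$, i.e.\ the stationary form of Eq.~\eqref{uNS} with Eq.~\eqref{constutive}.

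\textbf{Item (2).} We compare the Dirichlet data $u^n=u^{(0),n}_h$ with the velocity moment of the updated $h$. Taking $\langle v_z,\cdot\rangle_{\bm v}$ of $h^{(0),n+1}=c\,v_zf_{\text{eq}}$ gives $u^{(0),n+1}_h=c/2=u^n$ at leading order. The correction comes from the $\delta^{-1}$ layer. Collect the order-$\delta^{-1}$ terms of Eq.~\eqref{Vstrong}: $V^{(1)}$ is driven by $v_r\beta V^{(0)}$ and $\sigma_m v_zf_{\text{eq}}$, mirroring Eq.~\eqref{h1}. Its $v_z$-moment is orthogonal to $v_zf_{\text{eq}}$ up to the momentum-conservation condition $\langle v_z,h^{(i)}\rangle=0$. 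If that orthogonality holds exactly we get $\epsilon>0$; otherwise the discrepancy is $O(\delta^{-1})$, so $\epsilon\ge0$.

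The main obstacle is this last step and the inductive claim $V^{(0)}_i\propto v_zf_{\text{eq}}$. The Galerkin projections onto single modes need not enforce the moment condition exactly, because the sums $\sum_{i<m}$ couple the modes. I would first prove that the $O(1)$ part of Eq.~\eqref{Vstrong} is a projected version of Eq.~\eqref{h0}. That projection can only reproduce $h^{(0)}$ up to the rank-truncation error, and I would bound this truncation residual by the neglected higher-order terms, giving the $O(\delta^{-1-\epsilon})$ estimate.
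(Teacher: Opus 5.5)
Your core argument is correct, but it reaches the conclusion by a different route from the paper.

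\textbf{Your route.} You read the $O(1)$ velocity balance \eqref{V0} mode by mode. With $q^n=0$ at this order and $\alpha_{mm}=1\neq0$, induction on $m$ gives $V^{(0)}_i=c_iv_zf_{\text{eq}}$ for every $i$, hence $\hat\alpha^{(0)}_{mi}=c_i\hat\gamma^{(0)}_m$. The $O(1)$ spatial balance \eqref{X0} then collapses to $\sum_ic_iX_i=2u^n$. This step divides by $\hat\gamma^{(0)}_m$, so state $\hat\gamma^{(0)}_m\neq0$ explicitly, as the paper does. Thus $h^{(0),n+1}=2u^nv_zf_{\text{eq}}$ with a $\theta$-independent coefficient, and both items follow.

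\textbf{The paper's route.} The paper instead uses \eqref{X0} to eliminate $u^n$ from $\gamma_m=\int X^n_mu^n$, which brings in $\alpha^{\ast}_{mm}=\int X^n_mX^{n+1}_m$. It then assumes $\mathtt{tol}_{\text{in}}\sim\delta^{-1-\epsilon}$ to write $\mathbf{A}^{\ast}=(\mathcal{I}+\mathcal{E})\mathbf{A}$. Finally it passes from $\mathbf{A}\cdot\mathbf{V}^{(0)}=\mathbf{A}\cdot\mathbf{w}$ to $\mathbf{V}^{(0)}=\mathbf{w}$ by a genericity remark. That is where its $O(\delta^{-1-\epsilon})$ comes from, and what it buys is an explicit link between the asymptotic-preserving property and the inner-loop tolerance.

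\textbf{Comparison.} Your route needs neither the tolerance assumption nor the cancellation of $\mathbf{A}$, and it is sharper. Test $\sum_{i<m}c_iX_i+c_mX^{n+1}_m=2u^n$ against $X^n_m$ and compare with the $v_zf_{\text{eq}}$-coefficient of \eqref{V0}. This gives $c_m(\alpha_{mm}-\alpha^{\ast}_{mm})=0$. So the paper's $\mathcal{E}$ vanishes at leading order, and $u^{(0),n+1}_h=u^n$ holds exactly.

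\textbf{Remarks to remove.} Several side remarks are wrong or beside the point.
\begin{itemize}
\item In item (2), once you have $u^{(0),n+1}_h=c/2=u^n$ you are done for every $\epsilon\ge0$. Your discussion of the $\delta^{-1}$ layer and momentum orthogonality concerns the full moment $u^{n+1}_h$, not $u^{(0),n+1}_h$.
\item The closing ``main obstacle'' is illusory. The induction follows directly from the $O(1)$ balance, with no rank-truncation residual at this order.
\item In item (1), the viscous term is already the Laplacian on the left of the synthetic equation. After dividing by $\delta$, $V^{(1)}$ enters the brackets only at $O(\delta^{-2})$, so it does not ``produce $\nabla\cdot(\delta^{-1}\nabla u)$''. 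An extra such term would in fact alter the viscosity.
\item The vanishing of the brackets genuinely requires $c$ to be $\theta$-independent. Your sentence claiming the $\theta$-integrals would still factor out under residual $\theta$-dependence is false and should go.
\end{itemize}
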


\begin{proof}
    Eqs.~\eqref{Vstrong} and~\eqref{Xstrong} to solve the PGD modes are rewritten using vector notations:
    \begin{equation}\label{VstrongVector}
        \mathbf{A}\cdot\mathbf{V}+\delta^{-1}v_r\mathbf{B}\cdot\mathbf{V}=2\gamma_mv_zf_{\text{eq}}+\frac{4}{15}\kappa_mv_z\left(\bm{v}^2-\frac{5}{2}\right)f_{\text{eq}}-\delta^{-1}\sigma_mv_zf_{\text{eq}},
    \end{equation}
    \begin{equation}\label{XstrongVector}
        \mathbf{\hat{A}}\cdot\mathbf{X}+\delta^{-1}\nabla\cdot\bm{s}\mathbf{\hat{B}}\cdot\mathbf{X}=2\hat{\gamma}_mu^n+\frac{4}{15}\hat{\kappa}_m-\delta^{-1}\hat{\gamma}_m,
\end{equation}
where the vectors $\mathbf{V}$ and $\mathbf{X}$ collect
\begin{equation*}
    \mathbf{V}=\left[\begin{array}{cccc}
       V_1  & \cdots & V_{m-1} & V^{n+1}_m 
    \end{array}\right]^T,\quad \mathbf{X}=\left[\begin{array}{cccc}
       X_1  & \cdots & X_{m-1} & X^{n+1}_m 
    \end{array}\right]^T,\\
\end{equation*}
and the other vectors are
\begin{equation*}
    [\mathbf{A}]_i=\alpha_{mi},\quad [\mathbf{B}]_i=\beta_{mi},\quad [\mathbf{\hat{A}}]_i=\hat{\alpha}_{mi},\quad [\mathbf{\hat{B}}]_i=\hat{\beta}_{mi}.
\end{equation*}
Substituting the Chapman-Enskog expansion~\eqref{modeCE} into Eqs.~\eqref{VstrongVector} and~\eqref{XstrongVector} and collecting the terms at the order of $\delta^0$, we have
\begin{equation}\label{V0}
    \mathbf{A}\cdot\mathbf{V}^{(0)}=2\gamma_mv_zf_{\text{eq}},
\end{equation}
\begin{equation}\label{X0}
    \mathbf{\hat{A}}^{(0)}\cdot\mathbf{X}=2\hat{\gamma}^{(0)}_mu^n,
\end{equation}
Note that the terms of $q$ have been omitted, since they are equal to zero at this level. $\mathbf{\hat{A}}^{(0)}$ and $\hat{\gamma}^{(0)}$ are the result of the integrals of $V^{(0)}_i$. Now, we use $\mathbf{X}$ to express $\gamma_m$. To this end, $u^n$ can be calculated from Eq.~\eqref{X0} as $\hat{\gamma}^{(0)}_m\neq0$,
\begin{equation}
    u^n=\frac{1}{2}\left(\hat{\gamma}^{(0)}_m\right)^{-1}\mathbf{\hat{A}}^{(0)}\cdot\mathbf{X},
\end{equation}
substituting which into $\gamma_m$, we have
\begin{equation}
\begin{aligned}
    \gamma_m=\int X^n_mu^n\mathrm{d}x\mathrm{d}y\mathrm{\theta}&=\frac{1}{2}\left(\hat{\gamma}^{(0)}_m\right)^{-1}\mathbf{\hat{A}}^{(0)}\cdot\int X^n_m\mathbf{X}\mathrm{d}x\mathrm{d}y\mathrm{\theta}\\
    &=\frac{1}{2}\left(\hat{\gamma}^{(0)}_m\right)^{-1}\mathbf{\hat{A}}^{(0)}\cdot\mathbf{A}^{\ast},
\end{aligned}
\end{equation}
where
\begin{equation}
    [\mathbf{A}^{\ast}]_i=\begin{cases}
        \alpha_{mi},&\quad i<m,\\
        \alpha^{\ast}_{mm}=\int X^n_mX^{n+1}_m\mathrm{d}x\mathrm{d}y\mathrm{d}\theta,&\quad i=m.
    \end{cases} 
\end{equation}
For the PGD fix-point iteration, if the tolerance satisfies $\mathtt{tol_{\text{in}}}\sim \delta^{-1-\epsilon}$ with $\epsilon\geq0$, there exists a constant $C$ independent of $\delta$ such that
\begin{equation}
    \Big|\alpha^{\ast}_{mm}-\alpha_{mm}\Big|\leq\|X^{n
    +1}_m-X^n_m\|<C\delta^{-1-\epsilon},
\end{equation}
thus
\begin{equation}
    \mathbf{A}^{\ast}=\left(\mathcal{I}+\mathcal{E}\right)\mathbf{A},
\end{equation}
where $\mathcal{I}$ is a unit matrix and $\mathcal{E}$ is defined such that $[\mathcal{E}]_{mm}=C\delta^{-1-\epsilon}$ and the other entries are all zero. Note that $\alpha_{mm}=\|X^n_m\|=1$ due to normalisation. Consequently,
\begin{equation}\label{gamma}
    \gamma_m=\frac{1}{2}\left(\hat{\gamma}^{(0)}_m\right)^{-1}\mathbf{\hat{A}}^{(0)}\cdot\left(\mathcal{I}+\mathcal{E}\right)\mathbf{A}=\frac{1}{2}\left(\hat{\gamma}^{(0)}_m\right)^{-1}\left(\mathcal{I}+\mathcal{E}\right)\mathbf{\hat{A}}^{(0)}\cdot\mathbf{A}.
\end{equation}
Substituting Eq.~\eqref{gamma} into~\eqref{V0},
\begin{equation}
    \mathbf{A}\cdot\mathbf{V}^{(0)}=v_zf_{\text{eq}}\left(\hat{\gamma}^{(0)}_m\right)^{-1}\left(\mathcal{I}+\mathcal{E}\right)\mathbf{\hat{A}}^{(0)}\cdot\mathbf{A},
\end{equation}
thereby
\begin{equation}\label{V0X}
    \mathbf{V}^{(0)}=v_zf_{\text{eq}}\left(\hat{\gamma}^{(0)}_m\right)^{-1}\left(\mathcal{I}+\mathcal{E}\right)\mathbf{\hat{A}}^{(0)},
\end{equation}
since, in general, the difference of these two vectors is not orthogonal to $\mathbf{A}$. Multiplying Eq.~\eqref{V0X} by $\mathbf{X}$ and using the relation~\eqref{X0}, we eventually obtain
\begin{equation}
    \mathbf{X}\cdot\mathbf{V}^{(0)}=X_mV^{(0),n+1}_m+\sum^{m-1}_{i=1}X_iV_i=2v_zf_{\text{eq}}u^n+O(\delta^{-1-\epsilon}).
\end{equation}
For $u^{(0),n+1}_h=\langle v_z,\mathbf{X}\cdot\bm{V}^{(0)}\rangle_{\bm{v}}$, the two statements in Proposition~\ref{prop} are verified.
\end{proof}

\subsection{Treating rarefaction parameter as an extra coordinate}

In practice, parametric analysis is typically performed by calculating flow fields over a wide range of rarefaction degrees. PGD provides a method for rapidly solving the parametrised kinetic equation, where the rarefaction parameter $\delta$ is considered an additional coordinate. Flow properties at any point within the domain of the parameter can be accessed immediately, once the computational vademecum of flow velocity and heat flux is obtained.

The PGD solutions for $h$ now also depend on $\delta$, and can be expressed as
\begin{equation} h\left(x,y,v_r,\theta,v_z,\delta\right)=\sum^m_{i=1}X_i\left(x,y,\theta\right)V_i\left(v_r,v_z,\delta\right).
\end{equation}
The flow velocity and heat flux, hence, are functions of $\delta$, i.e. $u=u\left(x,y,\delta\right)$ and $q=q\left(x,y,\delta\right)$. In contrast to the widely used strategy of introducing parametric modes (additional modes as functions of $\delta$)~\cite{SEVILLA2020112631}, parametrising existing modes $V_i$ can keep the greedy PGD solution process as concise as possible. The algorithm for obtaining the PGD modes is the same as the Algorithm~\ref{Algorithm}, except for the following modifications. 

Rather than a single value input, the parameter $\delta$ in Eqs.~\eqref{Vstrong} and~\eqref{macroEq} is an independent variable that covers a continuous range. When solving the equations, it is discretised by $N_{\delta}$ discrete nodes $\{\delta^j\}^{N_{\delta}}_{j=1}$. Due to the dependency on the parameter, the boundary value problem for solutions of spatio-angular modes is modified as
\begin{equation}~\label{XstrongPara}
    \begin{aligned}
       \tilde{\alpha}_{mm} X^{n+1}_m+ \tilde{\beta}_{mm}\nabla\cdot\bm{s}X^{n+1}_m=2\tilde{\gamma}_mu^n+\frac{4}{15}\tilde{\kappa}_mq^n-\tilde{\sigma}_m \\
        -\sum^{m-1}_{i=1}\left(\tilde{\alpha}_{mi} X_i+\tilde{\beta}_{mi}\nabla\cdot\bm{s}X_i\right),\quad\left(x,y\right)\in\Omega,\\
       X^{n+1}_m=0,\quad \left(x,y\right)\in\partial\Omega,\ \bm{s}\cdot\bm{n}<0,
    \end{aligned}
\end{equation}
where
\begin{equation*}
    \begin{aligned}
        \tilde{\alpha}_{mi}=\int\delta V_{m}V_{i}v_r\mathrm{d}v_r\mathrm{d}v_z\mathrm{d}\delta,\quad \tilde{\beta}_{mi}=\int v_rV_{m}V_{i}v_r\mathrm{d}v_r\mathrm{d}v_z\mathrm{d}\delta,\\
        \tilde{\gamma}_m=\int \delta v_zV_{m}f_{\text{eq}}v_r\mathrm{d}v_r\mathrm{d}v_z\mathrm{d}\delta,\quad\tilde{\kappa}_m=\int\delta v_z\left(\bm{v}^2-\frac{5}{2}\right)V_{m}f_{\text{eq}}v_r\mathrm{d}v_r\mathrm{d}v_z\mathrm{d}\delta,\\
        \tilde{\sigma}=\int sV_{m}v_r\mathrm{d}v_r\mathrm{d}v_z\mathrm{d}\delta.
    \end{aligned}
\end{equation*}

For the parametric problem, the computational complexity increases to $O\left(M_{\text{md}} N^{\text{in}}_{\text{itr}}\left(N_rN_zN_{\delta}+ N_{\theta}N_{\text{DG}}+2N_{\text{CG}}N_{\delta}\right)\right)$ and the required memory becomes $O\left(M_{\text{md}}\left(N_rN_zN_{\delta}+ N_{\theta}N_{\text{DG}}+2N_{\text{CG}}N_{\delta}\right)\right)$.

\section{Numerical examples and Discussions}\label{Sec:Result}

In this section, numerical examples of Poiseuille flows through long straight channels with square, trapezoidal, and circular cross sections are presented to demonstrate the performance of PGD solutions for this high-dimensional, parametrised problem governed by kinetic equations. Figure~\ref{Mesh} shows the two-dimensional spatial domains and the triangular meshes for the discontinuous/continuous Galerkin finite element approximation. The side length, lower base, and radius of the three cross sections are set as the characteristic flow length. The meshes contain 128, 128, and 780 triangles, respectively. The degree of approximation polynomial basis selected for the spatial elements is $k=3$. The truncated velocity domains $v_r\in[0,4]$ and $v_z\in[-4,4]$ are discredited by $N_r=N_z=24$ non-uniformly distributed nodes~\cite{SUImplicitDG}. The polar angle $\theta\in[0,2\pi]$ is divided into $N_{\theta}=48$ intervals of equal spacing. Integrations involving velocity and angle are approximated by using the midpoint rule. For comparison, the same resolutions are used for the full-rank results. When considering the parametric problem, the range of the rarefaction parameter $\delta\in[0.01,100]$ is discretised by $N_{\delta}=33$ nodes, and the corresponding integration is evaluated by Simpson's rule. For PGD solutions, the maximum number of modes is set to $M_{\text{md}}=15$ or the enrichment stopped with $\mathtt{res}_{\text{out}}<\mathtt{tol}_{\text{out}}=5\times10^{-3}$, and the maximum allowed iteration steps and the tolerance for the fixed-point iteration are $N^{\text{in}}_{\text{itr}}=10$ and $\mathtt{tol}_{\text{in}}=10^{-3}$. All tests are run on a single Intel\textregistered~core\texttrademark~i7-12700K CPU.

\begin{figure}[t]
\centering
\includegraphics[width=\textwidth]{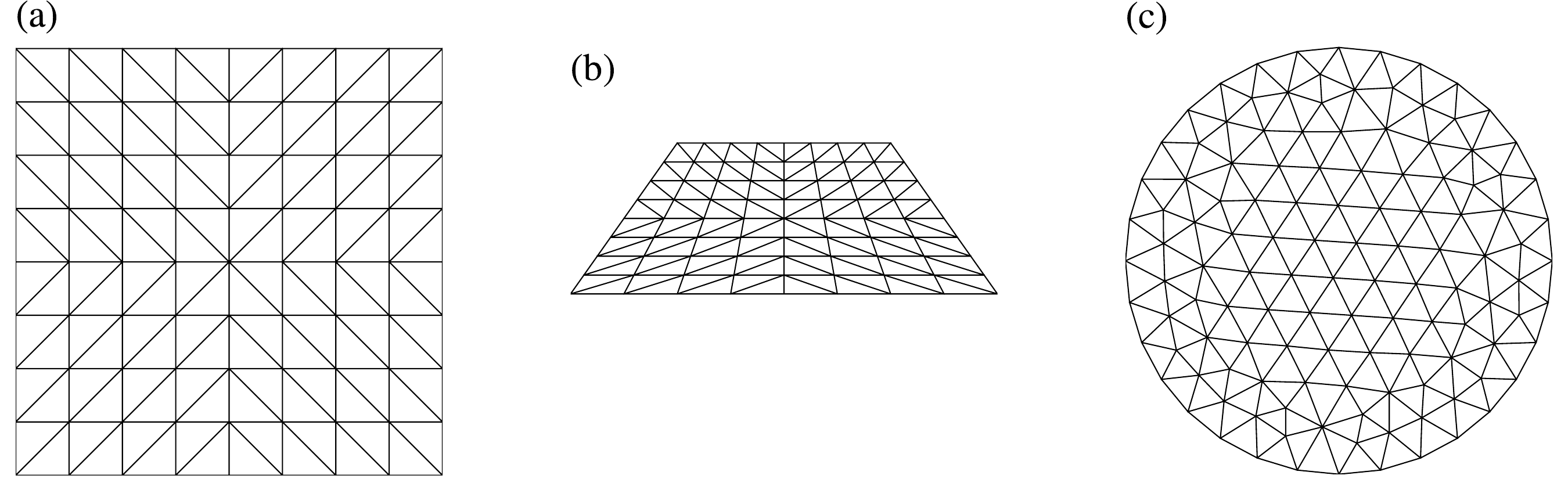}
\caption{Geometry and triangular mesh for two-dimensional spatial domain. (a) Square domain partitioning by 128 triangular elements; (b) Isosceles trapezoidal domain with a ratio of bases of 0.5 and an acute angle of 54.74\textdegree, partitioning by 128 elements; (c) Circular domain partitioning by 780 elements.}\label{Mesh}
\end{figure}

The results are shown based on flow velocity and heat flux calculated from moments of $h$, see Eq.~\eqref{moments}. In particular
\begin{equation}
    u_h=\sum^m_{i=1}Y_iU_i,\quad q_h=\sum^m_{i=1}Y_iQ_i,
\end{equation}
where
\begin{equation*}\label{PGDmoments}
    Y_i=\int X_i\mathrm{d}\theta,\ U_i=\int v_zV_iv_r\mathrm{d}v_r\mathrm{d}v_z,\ Q_i=\int v_z\left(\bm{v}^2-\frac{5}{2}\right)V_iv_r\mathrm{d}v_r\mathrm{d}v_z.
\end{equation*}
are called the $i$-th macroscopic spatial, velocity, and heat flux modes. We also name $Y_i\times U_i$ and $Y_i\times Q_i$ the modes of the velocity and heat flux fields, respectively.

\subsection{Flow through channel with square cross section}

\begin{figure}[t]
\centering
\includegraphics[width=\textwidth]{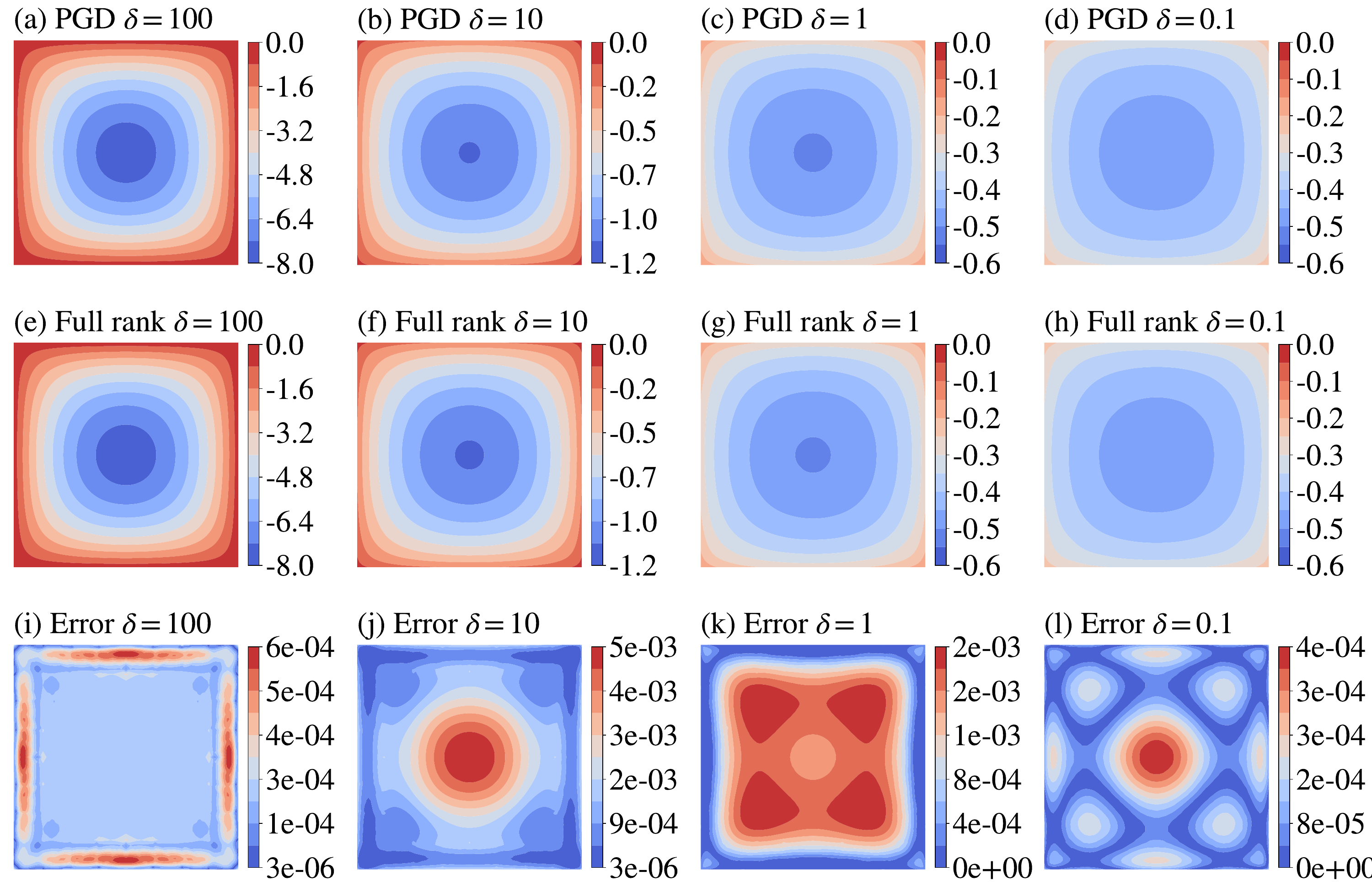}
\caption{Velocity flow from low-rank PGD and full-rank solutions. First row (a)-(d):PGD solutions with 5 modes; second row (e)-(h): full-rank solutions; third row (i)-(l):relative errors between the low- and full-rank solutions. First column (a), (e), (i) and (m): $\delta=100$; second column (b), (f), (j) and (n): $\delta=10$; third column (c), (g), (k) and (o): $\delta=1$; fourth column (d), (h), (l) and (p): $\delta=0.1$.}\label{GSISU}
\end{figure}

\begin{figure}[t]
\centering
\includegraphics[width=\textwidth]{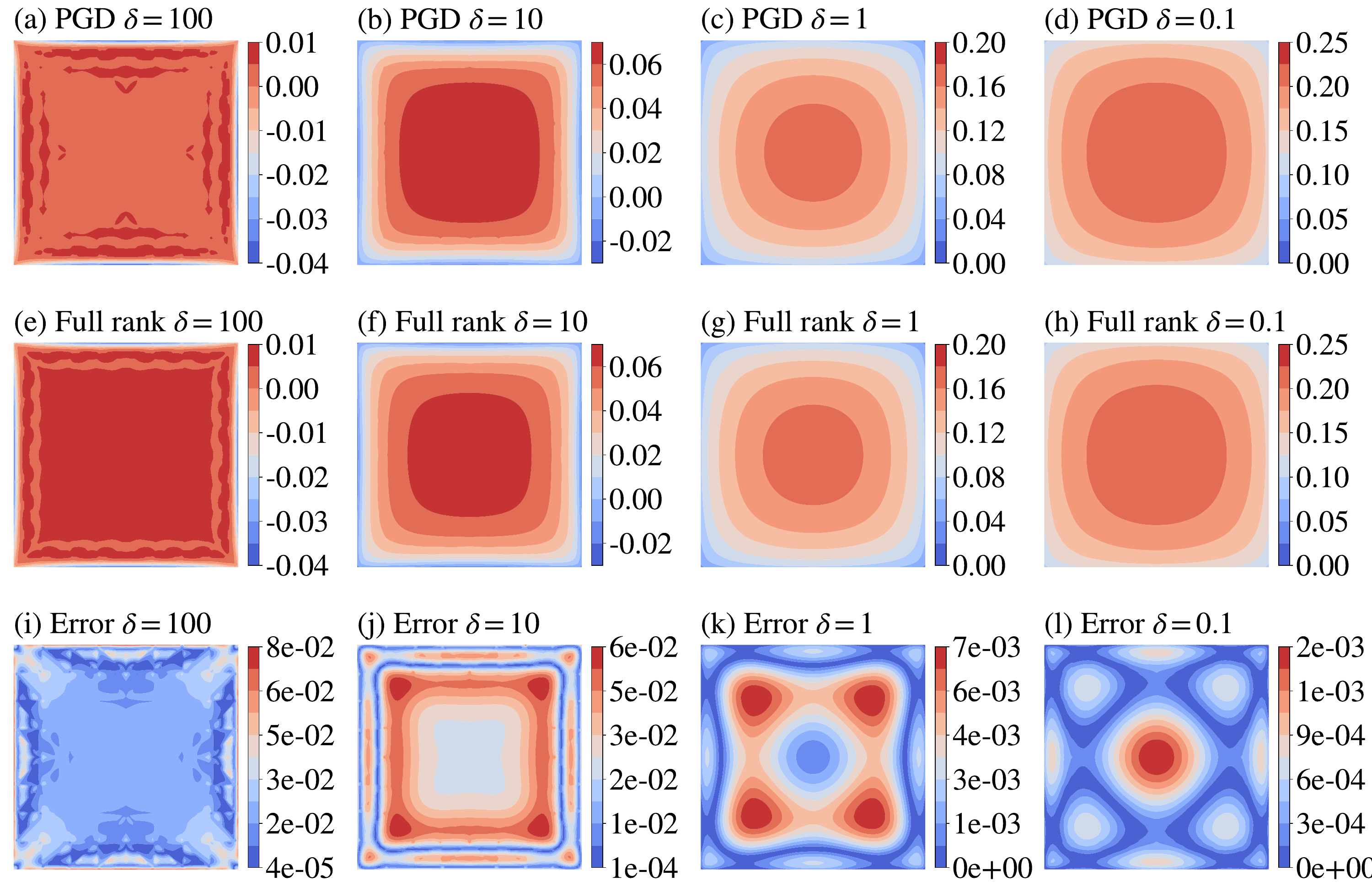}
\caption{Heat flow from low-rank PGD and full-rank solutions. First row (a)-(d):PGD solutions with 5 modes; second row (e)-(h): full-rank solutions; third row (i)-(l):relative errors between the low- and full-rank solutions. First column (a), (e), (i) and (m): $\delta=100$; second column (b), (f), (j) and (n): $\delta=10$; third column (c), (g), (k) and (o): $\delta=1$; fourth column (d), (h), (l) and (p): $\delta=0.1$.}\label{GSISQ}
\end{figure}

We first fix the number of total PGD modes, $M_{\text{md}}=15$.  Figures~\ref{GSISU} and~\ref{GSISQ} and illustrate the macroscopic flow velocity and heat flux of the Poiseuille flow through a long channel with a square cross section. Comparison between PGD solutions and full-rank solutions is visualised with the contour of relative error
\[
\frac{|u_{\text{PGD}}-u_{\text{full-rank}}|}{\max|u_{\text{full-rank}}|},\quad \frac{|q_{\text{PGD}}-q_{\text{full-rank}}|}{\max|q_{\text{full-rank}}|}
\]
Four different rarefactions $\delta=100$, $\delta=10$, $\delta =1$ and $\delta=0.1$ are considered. It shows that PGD with 15 modes recovers the flow field accurately to within $1\%$. Figure~\ref{Umode} plots the first four modes of the macroscopic velocity field normalised by the maximum magnitude of the first mode, namely $Y_i\times U_i/\max\{|Y_1\times U_1|\}$. It can be seen that the first mode captures the most relevant and global feature of the solution, while the next modes introduce local corrections, just as usual in reduced-order models~\cite{SEVILLA2020112631}.

\begin{figure}[h]
\centering
\includegraphics[width=\textwidth]{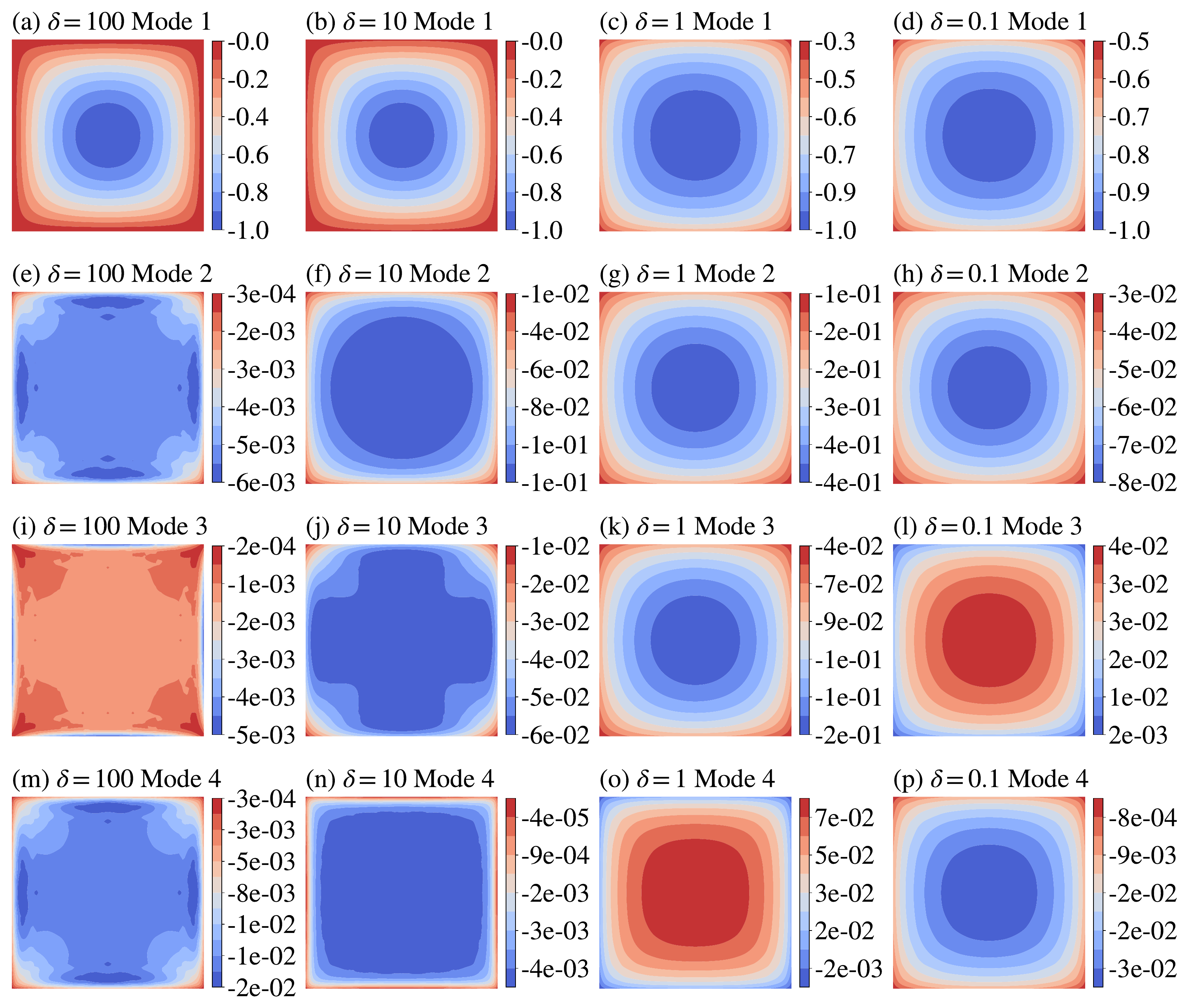}
\caption{First four normalised PGD modes of velocity field $Y_i\times U_i/\max\{|Y_1\times U_1|\}$ of Poiseuille flow through a long channel with square cross section. First row (a)-(d): $i=1$; second row (e)-(h): $i=2$; third row (i)-(l): $i=3$; fourth row (m)-(p): $i=4$. First column (a), (e), (i) and (m): $\delta=100$; second column (b), (f), (j) and (n): $\delta=10$; third column (c), (g), (k) and (o): $\delta=1$; fourth column (d), (h), (l) and (p): $\delta=0.1$.}\label{Umode}
\end{figure}

To quantify the importance of the modes in the PGD solution, the relative amplitudes (related to those of the first) of the modes of the velocity and heat flux fields are plotted in Figure~\ref{modeAmp}. The results are compared with the modes obtained by applying SVD to the full-rank solution, where SVD provides the optimal choice for the reduced basis~\cite{PGDprimer}. It is observed that the SVD results decay dramatically versus the modes, indicating the low-rank structure of the solutions. For $\delta=10$, 1 and 0.1, the magnitude and downward trend of the PGD mode amplitudes are very close to those of the SVD results, showing a good performance of the PGD algorithm in search for the reduced basis. When $\delta=100$, the amplitudes of the PGD modes generally drop, although they decay slowly and can undergo large oscillations. Note that when the gas is collision dominated, the right-hand side of the kinetic equation becomes stiff.

\begin{figure}[h]
\centering
\includegraphics[width=\textwidth]{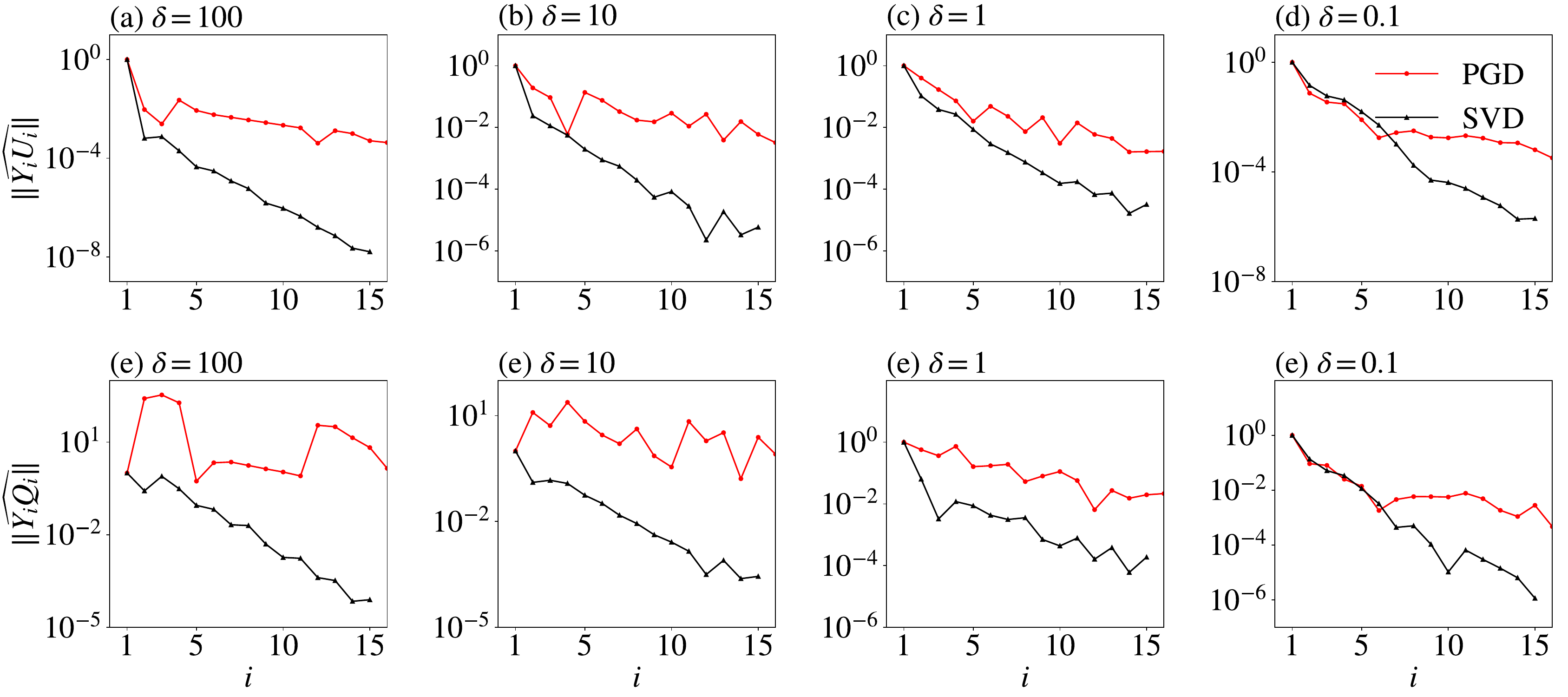}
\caption{Relative amplitudes of the modes of velocity and heat flux field: $\|\widehat{Y_iU_i}\|=\|Y_i\|\cdot\|U_i\|/\|Y_1\|\cdot\|U_1\|$ and $\|\widehat{Y_iQ_i}\|=\|Y_i\|\cdot\|Q_i\|/\|Y_1\|\cdot\|Q_1\|$. Red lines with circular markers show PGD modes. Black lines with triangular marks are the modes obtained by SVD for the full-rank solutions. First column (a) and (e): $\delta=100$; second column (b) and (f): $\delta=10$; third column (c) and (g): $\delta=1$; fourth column (d) and (h): $\delta=0.1$. }\label{modeAmp}
\end{figure}
 
\begin{sidewaystable}[t]
\centering
\begin{tabular}{ccccccccccccccc}
\hline
\multirow{2}{*}{$\delta$} & \multicolumn{2}{c}{Reference~\cite{Sharipov1999}} & & \multicolumn{4}{c}{Full-rank} & & \multicolumn{6}{c}{PGD} \\
\cline{2-3}\cline{5-8}\cline{10-15}
 & $G_P$ & $G_T$ & & $G_P$ & $G_T$ & $N^{\text{fr}}_{\text{itr}}$ & $t_{\text{CPU}}$, [s] & & $G_P$ & $Err, [\%]$ & $G_T$ & $Err, [\%]$ & $m$ & $t_{\text{CPU}}$, [s] \\
\hline
0.001 & 0.8317 & 0.4181 & & 0.8366 & 0.4172 & 3 & 1.66 & & 0.8362 & 0.05 & 0.4170 & 0.05 & 3 & 0.26\\
0.1 & 0.7959 & 0.3637 & & 0.7949  & 0.3647  &  6 & 3.14 & & 0.7948 & 0.01  & 0.3647 & 0.00 & 15 & 3.13\\
0.5 & 0.7663 & 0.2953 & & 0.7674  & 0.2962  & 10 & 5.19 & & 0.7669 & 0.07  & 0.2933 & 0.98 & 7 & 0.83\\
1 & 0.7739 & 0.2545 & & 0.7754  & 0.2557  & 14 & 7.45 & & 0.7702 & 0.67 & 0.2469 & 3.4 & 9 & 2.11\\
2 & 0.8175 & 0.2070 & & 0.8194  & 0.2084  & 18 & 9.16 & & 0.8069 & 1.5 & 0.1918 & 8.0 & 8 & 2.14\\
5 & 0.9950 & 0.1366 & & 0.9973  & 0.1381  & 22 & 11.46 & & 0.9634 & 3.4 & 0.1250 & 9.5 & 9 & 2.47\\
10& 1.323 & 0.0868 & & 1.325  & 0.0880  & 22 & 11.56 & & 1.239 & 6.5 & 0.0777 & 12 & 7 & 1.59\\
20& 2.006 & 0.0495 & & 2.006  & 0.0506  & 23 & 12.00 & & 1.935 & 3.5 & 0.0456 & 9.9 & 7 & 1.21\\
40& 3.395 & 0.0263 & & 3.393  & 0.0273  & 25 & 13.05 & & 3.346 & 1.4 & 0.0247 & 9.5 & 8 & 1.47\\
50& - & - & & 4.090  & 0.0223  & 27 & 14.19 & & 4.040 & 1.2 & 0.0200 & 10 & 8 & 1.39\\
80& - & - & & 6.186  & 0.0143  & 33 & 16.46 & & 6.118 & 1.1 & 0.0127 & 11 & 8 & 1.63\\
100& - & - & & 7.585 & 0.0115  & 37& 19.32 & & 7.473 & 1.5 & 0.0100 & 13 & 7 & 1.88\\ 
1000 & - & - & & 70.80 & 0.0011 & 202 & 110.77 & & 70.29 & 0.7 & 0.0009 & 18 & 2 & 0.21 \\
\hline
\multicolumn{10}{l}{$\delta=1000,\ G_P=70.29,\ G_T=0$ from Eq.~\eqref{GpHydro}}\\
\multicolumn{10}{l}{$\delta =0,\ G_P=0.8387,\ G_T=0.4194$ from Eq.~\eqref{GpGtfree}}\\
\end{tabular}
\caption{Comparison of full-rank and PGD solutions at different rarefaction parameters: $G_P$ -- Poiseuille flow rate; $G_T$ -- thermal creep flow rate; $N^{\text{fr}}_{\text{itr}}$ -- number of iterations to obtain converged full-rank solutions (maximum relative difference in flow velocity and heat flux between two successive iteration steps is less than $10^{-5}$); $m$ -- total number of PGD modes that represent the solution; $t_{\text{CPU}}$ -- CPU time measured in hour; $Err$ -- the relative errors comparing the PGD results with the full-rank solutions. Results for some $\delta$ in~\cite{Sharipov1999} are also listed}\label{Table1}
\end{sidewaystable}

To show the efficiency in finding low-rank PGD solutions, {now \color{red}we do not specify the number of modes, but let the enrich loop terminate when the criterion~\eqref{outer} satisfies. For full-rank solution, the synthetic iterative scheme is stopped with tolerance
\[
\max\Bigg\{\frac{\|u^{n+1}-u^n\|}{\|u^n\|},\frac{\|q^{n+1}-q^{n}\|}{\|q^n\|}\Bigg\}<10^{-5}.
\]
}
 CPU times to obtain full-rank and PGD solutions are compared for different rarefaction parameters in Table~\ref{Table1}. The flow rates of the Poiseuille and thermal creep flows are also listed for comparison. The PGD can produce almost the same flow rates as full-rank results when the flow is rarefied. The precision of the low-rank solutions slightly deteriorated when $\delta$ became large; however, the maximum difference (appearing in $G_T$ at $\delta=80$) is still less than 3.6\%. The CPU time to obtain a full-rank solution increases dramatically as $\delta$ increases, since the number of iterations to achieve convergence increases significantly~\cite{SU2020109245}. In contrast, PGD costs almost the same time to obtain a solution at any $\delta$, that is, the computational complexity of PGD remains the same for the whole range of rarefaction. Due to the reduced complexity, PGD can be much faster than the full-rank method, and the memory consumption by PGD is only about 2.6\% of that of the full-rank method.  

The effect of adding synthetic macroscopic equations to accelerate convergence is also investigated. Figure \ref{GSISU} and \ref{GSISQ} compare the full-rank and low-rank PGD velocity and heat flux solutions respectively with $\delta$ varying between 0.1 to 100. The illustrated full-rank solution is obtained by the same SIS algorithm with discrete velocity method for solving the kinetic equation. The PGD solutions each consists of 5 modes. Both algorithms stop when a tolerance of $10^{-5}$ for the velocity and heat flux is reached. It can be seen that the PGD solutions are in good agreement with the full-rank solutions across all choices of $\delta$. 

The CPU time and memory costs for both algorithms are recorded. The CPU time is tabulated in Table~\ref{Table1}, where the PGD solutions are found to be much faster than the full-rank solutions, and the CPU time of PGD is almost independent of $\delta$. 
During the program, to store the full-rank solution takes 35 million doubles at about 283 MB, while for the low-rank solution it takes 0.31 million doubles occupying 2.4 MB memory. The memory cost for storing solutions is significantly reduced. The results show that the SIS can effectively accelerate convergence for large $\delta$, and the PGD can further reduce the computational cost by searching for low-rank solutions.



\subsection{Flow through channel with trapezoidal/circular cross section}

\begin{figure}[h]
\centering
\includegraphics[width=0.85\textwidth]{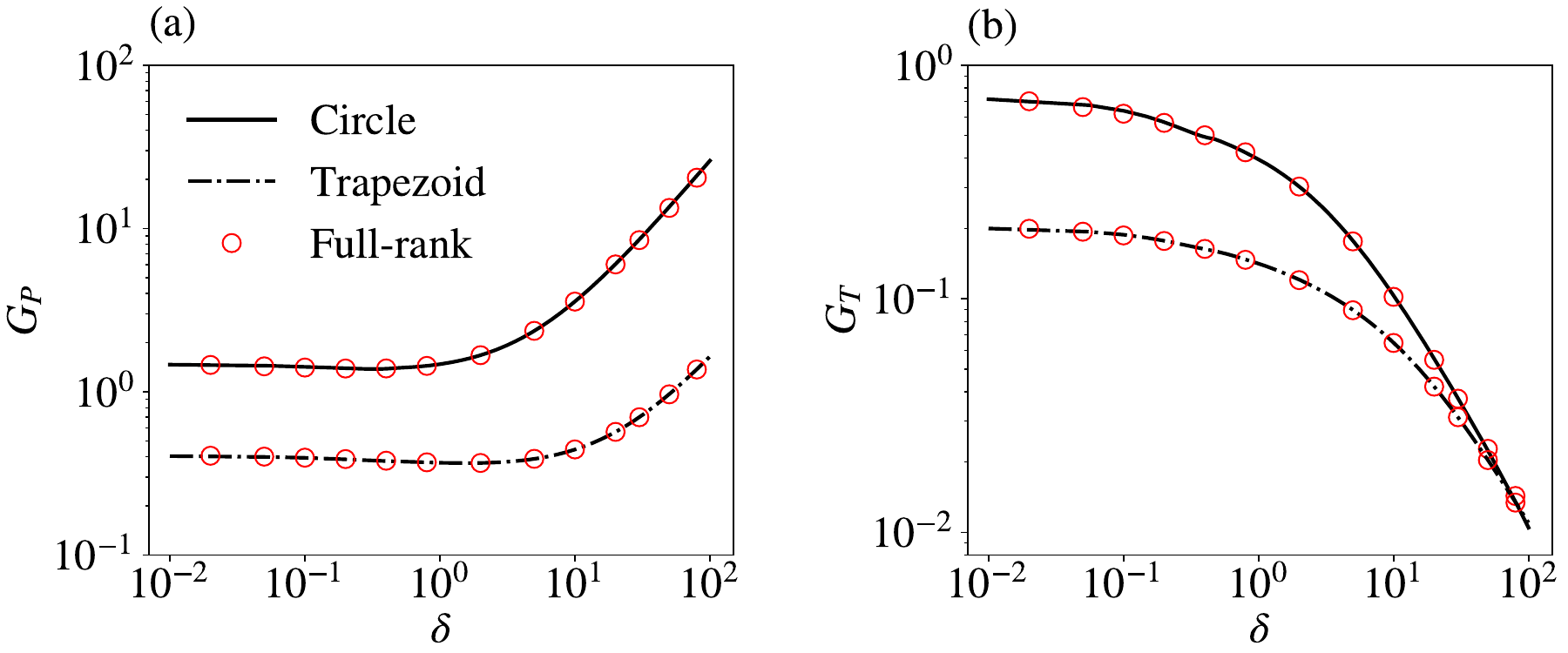}
\caption{(a) Dimensionless flow rate of Poiseuille flow and (b) dimensionless flow rate of thermal creep flow through long channels with trapezoidal and circular cross sections. Lines are PGD general solutions as continuous functions of rarefaction parameter $\delta$. Markers are full-rank solutions at some values of $\delta$.}\label{FRs}
\end{figure}

The parametrised kinetic equation~\eqref{ShakhovCylinder} with $\delta$ serving as an additional coordinate is solved by PGD for flows through long channels with trapezoidal and circular cross sections. Running the PGD calculation once, a general solution of the flow field is obtained that contains all possible data for $\delta$ varying from 0.01 to 100. Figure~\ref{FRs} illustrates dimensionless flow rates as continuous functions of the rarefaction parameter $\delta$. Full-rank solutions calculated at some values of $\delta$ are also included for comparison. PGD is found to be able to recover solutions with high precision in the whole range of $\delta$ considered. In addition to this, PGD only used 0.261 hours to obtain the general solution for the channel with the trapezoidal cross section and 0.405 hours for the channel with the circular cross section.     

Once the PGD general solution is obtained and stored as a computational vacuum, it can be used for many purposes, such as parametric analysis and the investigation of inverse problems. Here, as an example, the thermomolecular pressure difference (TPD) problem is considered. It is a specific state where the Poiseuille and thermal creep flows counterbalance and the net flow rate vanishes. The pressures and temperatures in the two reservoirs connected by the long channel satisfy the following relationship
\begin{equation}
    \frac{P_2}{P_1}=\left(\frac{T_2}{T_1}\right)^{\eta},
\end{equation}
where $\eta$ is the TPD coefficient. To evaluate this coefficient, it is assumed that the temperature ratio $T_2/T_1$ and the rarefaction $\delta_1$ with respect to $P_1$ and $T_1$ are known. The pressure ratio $P_2/P_1$ is solved from~\cite{Graur2009,Ritos01112011}
\begin{equation}\label{TPD}
    \frac{\mathrm{d}\tilde{P}}{\mathrm{d}\tilde{T}}=\frac{\tilde{P}}{\tilde{T}}\frac{G_T\left(\delta\right)}{G_P\left(\delta\right)},
\end{equation}
where $\tilde{P}=P/P_1$ and $\tilde{T}=T/T_1$ are the normalised local pressure and temperature along the flow direction, and $\delta=\delta_1\tilde{P}/\tilde{T}$ is the local rarefaction parameter. Using the PGD solutions of $G_P\left(\delta\right)$ and $G_T\left(\delta\right)$, $P_2/P_1$ can be obtained by integrating Eq.~\eqref{TPD} along $1\le\tilde{T}\le T_2/T_1$. The TPD coefficients for $T_2/T_1=3.8$ and various $\delta_1$ are listed in Table~\ref{Table2}. The results obtained from the general PGD solutions are compared with those in previous works based on full-rank calculations. It further shows the accuracy of the parametrised solutions obtained by PGD.

\begin{table}[t]
\centering
\begin{tabular}{cccccc}
\hline
$\delta_1$ & Circle & Circle in~\cite{Graur2009} & $\delta^{\ast}_1$ & Trapezoid & Trapezoid in~\cite{Ritos01112011}\\
\hline
0.02 & 0.4862 & 0.4849 & 0.02 & 0.4912 & 0.4907 \\
0.05 & 0.4765 & 0.4706 & 0.05 & 0.4815 & 0.4791 \\
0.1  & 0.4611 & 0.4531 & 0.1  & 0.4670 & 0.4646 \\
0.2  & 0.4330 & 0.4275 & 0.2  & 0.4441 & 0.4441 \\
0.5  & 0.3727 & 0.3761 & 0.5  & 0.4033 & 0.4019 \\
0.8  & 0.3366 & 0.3389 & 0.8  & 0.3725 & 0.3722 \\
1.0  & 0.3174 & 0.3186 & 1.0  & 0.3572 & 0.3560 \\
2.0  & 0.2458 & 0.2468 & 2.0  & 0.2970 & 0.2961 \\
5.0  & 0.1409 & 0.1423 & 5.0  & 0.2000 & 0.1991 \\
10.0 & 0.0739 & 0.0743 & 10.0 & 0.1253 & 0.1246 \\
20.0 & 0.0310 & 0.0306 & 20.0 & 0.0642 & 0.0647 \\
\hline
\end{tabular}
\caption{TPD coefficients for $T_2/T_1=3.8$ and various $\delta_1$. Results from PGD parametrised solutions are compared with those in previous works based on full-rank calculations. $\delta^{\ast}_1=0.4483\delta_1$ is the rarefaction parameter based on the hydraulic diameter of the channel.}\label{Table2}
\end{table}


\section{Conclusions}\label{Sec:conclusion}

An asymtotic-preserving reduced-order solution strategy based on the PGD method is proposed for the high-dimensional parameterised Shakhov model equation, which describes the dynamics of rarefied gas flows driven by pressure and temperature gradients through a long channel with an arbitrary cross section. The low-rank solution of the velocity distribution function is approximated as the sum of a small number of function products, where each product contains a spatio-angular mode and a mesoscopic velocity mode. A sophisticated algorithm is designed to find these PGD modes. Due to the separated representation, evaluating the velocity distribution function is transformed into a few low-dimensional problems. Consequently, computational complexity is significantly reduced. The AP property is achieved based on the synthetic iterative scheme, where solutions to macroscopic equations are inserted into the PGD iteration. Consequently, the scheme can approximate the correct low-rank solution when the Knudsen number is small, because it becomes a macroscopic solver for the Navier-Stokes equations. The numerical results show that the method can obtain a solution in 1 minute at any Knudsen number and cost only 2.6\% of memory for the full-rank solution. For parametrised flows, the rarefaction parameter serves as an additional coordinate. A general solution is calculated once for all in the whole parameter range, and a specific solution is accessible in real time. Compared to full-rank solutions, the PGD solution possesses high accuracy.


The proposed reduced-order method is applicable for modelling particle transport processes beyond gas molecules, e.g., phonons, neutrons, photons, which are described by a kinetic equation.    

\section*{Acknowledgments}

We are deeply grateful to Dr Xi Zou (Zienkiewicz Institute for Modelling, Data and AI, Swansea University) for his discussions on the principle of PGD. His perspective prompted the creation of our method.



\bibliographystyle{elsarticle-num} 
\bibliography{Ref}

\end{document}